\documentclass[10pt]{article}
\usepackage{amsfonts}

\usepackage{amsmath}
\usepackage{graphicx}

\newtheorem{theorem}{Theorem}[section]

\newtheorem{corollary}[theorem]{Corollary}

\newtheorem{lemma}[theorem]{Lemma}

\newtheorem{problem}[theorem]{Problem}
\newtheorem{proposition}[theorem]{Proposition}
\newtheorem{remark}[theorem]{Remark}

\newenvironment{proof}[1][Proof]{\textbf{#1.} }{\ \rule{0.5em}{0.5em}}

\def\RR{\mathbb{R}}
\def\EE{\mathbb{E}}

\def\cF{{\cal F}}

\def\de{{\delta}}
\def\la{{\lambda}}
\def\si{{\sigma}}

\def\Om{{\Omega}}
\def\om{{\omega}}

\def\ga{{\gamma}}
\def\de{{\delta}}

\def\si{{\sigma}}

\def\la{{\lambda}}

\def\vare{{\varepsilon}}

\begin{document}

\title{ A singular SDE driven by additive
fractional Brownian motion with Hurst parameter $H<\frac12$}
\author{Xiaoming Song and Alexander Tortoriello\\
\small Department of Mathematics\\
\small Drexel University\\
\small Philadelphia, PA 19104, USA\\
\small xs73@drexel.edu, at3625@drexel.edu}
\date{\today}
\maketitle
\begin{abstract} 
In this article we study a class of singular  stochastic differential equations driven by fractional Brownian motion with Hurst parameter $H<\frac12$. The solution is constructed as the limit of a family of approximating processes, and its trajectory properties  are  investigated.
\end{abstract}
\medskip
Keywords:   fractional Brownian motion, singular stochastic
differential equation, H\"older continuity, comparison criterion.

\section{Introduction}
We are interested in the study of a class of singular stochastic differential equations of the following form
\begin{equation}\label{intro-1}
X_t^H=X_0+a\int_0^t\frac{s^{2H-1}}{X_s^H}ds-b\int_0^t X_sds+\sigma B_t^H,
\end{equation}
where $X_0>0$, $a>0$, $b\geq 0$, $\si>0$ are given constants, and $B^H=\{B^H, t\geq 0\}$ is a fractional Brownian motion with Hurst parameter $H<\frac12$. The function $f(t,x)=\frac{at^{2H-1}}{x}-bx$ is called the drift of equation \eqref{intro-1}.

 The existence of weak and strong solutions for this type of stochastic differential equations has been studied under different assumptions on the drift $f(t,x)$  in both cases $H>\frac12$ and $H\leq \frac12$. Using Girsanov's theorem for fractional Brownian motion, Nualart and Ouknine in \cite{MR1934157} proved the existence and uniqueness of a strong solution under the assumption that $f(t,x)$ satisfies the linear growth condition $|f(t,x)|\leq C(1+|x|)$ if $H\leq\frac12$, and that $f(t,x)$ is H\"older continuous of order $\alpha>1-\frac{1}{2H}$ in $x$ and of order $\gamma>H-\frac12$ in $t$ if $H>\frac12$. Boufoussi and Ouknine \cite{MR2042751} extended the result in the case $H>\frac12$ to a drift given as the sum of  a H\"older continuous function and a nondecreasing function with left (or right) continuity. By means of Girsanov's theorem, Mishura and Nualart \cite{MR2125162} obtained the existence of a weak solution   for $H\in \left(\frac12, \frac{1+\sqrt{5}}{4}\right)$ assuming that the drift  $f(t,x)=f(x)$ has finitely many jump-discontinuities and is piecewise H\"oder continuous of order $\alpha\in \left(1-\frac{1}{2H}, 1\right)$ on each  interval where the drift is continuous.

The goal of this paper is to investigate the stochastic differential equation \eqref{intro-1} for $H<\frac12$, and in this case the drift $f(t,x)=\frac{at^{2H-1}}{x}-bx$ has singularities at $t=0$ and $x=0$. Although the drift contains a linear term in $x$,  it does not satisfy the linear growth condition in $x$. So, the result in Nualart and Ouknine \cite{MR1934157} cannot be applied directly to our work. For this type of singular drive, in the case of $H>\frac12$ and $b=0$, Hu, Nualart and Song in \cite{MR2458016} used the techniques of fractional calculus for the Riemann-Stieltjes integral to obtain a unique positive and continuous solution with moments of all orders. Also, by means of Malliavin calculus, the authors  proved that the law of the solution $X_t$ has a density with respect to the Lebesgue measure for any $t>0$. However, for the case $H<\frac12$, the method of fractional calculus does not work anymore because the order of H\"older continuity of $B^H$ is too small.  Inspired by the work of Mishura and Yurchenko-Tytarenko  \cite{MR3935425} in which they considered the drift function of the form $f(t,x)=\frac{k}{x}-bx$, we construct a family of approximating processes $X^\vare=\{X^\vare_t, t\geq0\}$, where $X^\vare$ satisfies a stochastic differential equation whose drift has a linear growth in $x$. As $\vare\to 0$, $\{X^\vare\}$ converges to a process $X$ which is defined as the solution of \eqref{intro-1}. We prove that the trajectories of the solution $X$ are  nonnegative and almost everywhere continuous on $[0,\infty)$ almost surely. Unlike the drift in \cite{MR3935425},  the singular function $t^{2H-1}$ appears in our drift $f(t,x)$ and it should be also approximated by an appropriate function. This singular function requires the development of new techniques to prove the continuity of the solution’s trajectories .

This type of singular stochastic differential equations is partially motivated by the equation satisfied by the $d$-dimensional fractional Bessel process $R_t=\sqrt{\sum_{i=1}^d(B_t^{H,i})^2}$, $d\geq 2$ (see Essaky and Nualart \cite{MR3385597} for $H<\frac12$ and Guerra and Nualart \cite{MR2105371} for $H>\frac12$): 
\[
R_t=H(d-1)\int_0^t\frac{s^{2H-1}}{R_s}ds+Y_t,
\]
where  $Y_t=\int_0^t\sum_{i=1}^d\frac{B_s^{H,i}}{R_s}\delta B_s^{H,i}$ is a divergence integral, and the process $Y=\{Y_t, t\geq 0\}$ is $H$-self-similar. It has been proved that the process $Y=\{Y_t, t\geq 0\}$ is not a one-dimensional fractional Brownian motion by Hu and Nualart \cite{MR2137449} in the case $H\neq\frac12$. 

In  the case of $H=\frac12$, $B^{\frac12}$ is the standard Brownian motion. If $W=\{(W_t^1, \dots, W_t^d), t\geq 0 \}$, $d\geq 2$, is the $d$-dimensional Brownian motion, then the $d$-dimensional Bessel process $R_t=\sqrt{\sum_{i=1}^d(W_t^{i})^2}$, satisfies the following differential equation:
\[
R_t=\frac{d-1}{2}\int_0^t \frac{1}{R_s}ds+\sum_{i=1}^d\int_0^t\frac{W^i_s}{R_s}dW_s^i,
\]
where the integral is the classical It\^o's integral and the process $Y=\{Y_t=\sum_{i=1}^d\frac{W^i_s}{R_s}dW_s^i, t
\geq 0\}$ is a one-dimensional Brownian motion (see Karatzas and Shreve \cite{MR1121940} for more details). Notice that the initial condition is zero in the stochastic differential equations satisfied by Bessel processes.

The rest of the paper is organized as follows. In Section 2, we introduce a family  of approximating processes $\{X^\vare\}$ and use a comparison criterion to prove the monotonicity of the processes in $\vare$. After a boundedness result is obtained, the limiting process can be derived and will be  considered as a solution of the stochastic differential equation. Section 3 is devoted to study the trajectory properties of the solution, such as nonnegativity, continuity, Riemann integrability.

\setcounter{equation}{0}
\section{Approximation processes}

Let $B^H=\{B^H_t, \, t\geq 0\}$ be a fractional Brownian motion with Hurst parameter $H\in (0,1)$, defined in a complete probability space $(\Omega, \cF, \mathbb{P})$. Namely, $B^H$ is a mean zero Gaussian
process with covariance
\begin{equation*}
\mathbb{E}(B_{t}^{H}B_{s}^{H})=R_{H}(s,t)=\frac{1}{2}\left(
t^{2H}+s^{2H}-|t-s|^{2H}\right) \,.  \label{cov}
\end{equation*}%

It implies from the above covariance function that for any $s, t\geq 0$, we have 
\begin{equation}\label{fbm-inc}
\EE(|B_t^H-B_s^H|^2)=|t-s|^{2H}.
\end{equation}

Thus, the fractional Brownian motion $B^H$ has stationary increment, and $B^H_t-B^H_s$ has normal distribution $\mathcal{N}(0, |t-s|^{2H})$. In the case $H=\frac12$, $B^{\frac12}$ is the standard Brownian motion.

By Kolmogorov's continuity criterion and Garsia-Rodemich-Rumsey Lemma (see \cite{MR267632} or Section A.3 in \cite{MR2200233}), we deduce that $B^H$ has a version with continuous trajectories satisfying the following property.
\begin{itemize}
\item[(a)] For any $T>0$
\[
\sup\limits_{s\in[0,T]}|B^H_s(\om)|<+\infty, \ \forall \om\in \Om.
\]
\item[(b)] For any $T>0$ and for any $\ga\in (0,H)$ , there exists a nonnegative random variable $C_{\ga,T}$ such that $\EE(C_{\ga,T}^p)<+\infty$ for all $p\geq 1$,
\begin{equation}\label{fbm-hold}
|B_t^H(\om)-B_s^H(\om)|\leq C_{\ga,T}(\om)|t-s|^{H-\ga}, \ \forall s, t\in [0,T], \ \om\in\Omega.
\end{equation}
\end{itemize}

\bigskip
In this section, let us fix $T>0$. Consider
\begin{equation}\label{sde-1}
Y_t=y_0+\int_0^tb(s,Y_s)ds+\si B^H_t, \ t\in[0,T],
\end{equation}
where $y_0\in\RR$, $b:[0,T]\times \mathbb{R}\to \mathbb{R}$ is measurable.

\begin{itemize}
\item[(i)]
For $H\leq\frac12$, if there is a constant $C>0$ such that
\[
|b(t,y)|\leq C(1+|y|),\ \forall  t\in [0,T],\ y\in\RR,
\]
 then it follows from Theorem 8 in \cite{MR1934157} that equation \eqref{sde-1} has a unique strong solution and the solution has continuous trajectories.

\item[(ii)] For $H>\frac12$, if there exist some constants $L>0$, and some function $b_0\in L^\rho([0,T])$ for some $\rho\geq 2$ such that
\[
|b(t,y)|\leq L|y|+b_0(t),\ \forall t\in[0,T],  y\in \RR,
\]
and if for any $N>0$ there exists a constant $C_N>0$ such that 
\[
|b(t,x)-b(t,y)|\leq C_N|x-y|, \ \forall  |x|, |y|\leq N, \ t\in[0,T],
\]

then Theorem 2.1 in \cite{MR1893308} guarantees the existence and uniqueness of the solution of \eqref{sde-1}.
\end{itemize}

 For any $\vare>0$, consider the stochastic process $X^\vare=\{X^\vare_t,\, t\in [0,T] \}$ satisfying the following equation
\begin{equation}\label{sde-2}
X^\vare_t=X_0+a\int_0^t\frac{(s+\vare)^{2H-1}}{X_s^\vare\boldsymbol{1}_{\{X^\vare_s>0\}}+\vare}ds-b\int_0^tX^\vare_sds+\si B^H_t,\quad t\in[0,T].
\end{equation}

Notice that the function $b^\vare(t,x)=\frac{a(t+\vare)^{2H-1}}{x\boldsymbol{1}_{\{x>0\}}+\vare}-bx$ satisfies both (i) and (ii). Hence, equation \eqref{sde-2} has a unique strong solution on $[0,T]$.\\

Let us first present the following comparison criterion.
\begin{proposition}\label{prop-comp}
Let $b(t)$ be a continuous function on $[0,\infty)$. Assume that continuous functions $x^i=\{x^i_t,\, t\geq 0\}$, $i=1, 2$, satisfy the following equations
\begin{equation*}
x^i_t=x_0+\int_0^tg_i(s)f_i(x^i_s)ds+\int_0^t h_i(x^i_s)ds+b(t), \ t\geq 0,
\end{equation*}
where $x_0\in\mathbb{R}$ is a constant, and $g_i, f_i, h_i$, $i=1, 2$, are continuous functions such that 
\begin{itemize}
\item[(a)] $0<g_1(t)<g_2(t)$, $\forall t\geq 0$;
\item[(b)] $0< f_1(x)<f_2(x)$, $\forall x\in\RR$;
\item[(c)] $h_1(x)\leq h_2(x)$, $\forall x\in\RR$.
\end{itemize}
Then, for all $t> 0$, we have $x^1_t<x^2_t$.
\end{proposition}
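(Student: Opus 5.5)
The plan is to study the difference $\Delta_t = x^2_t - x^1_t$. The common forcing term $b(t)$ and the common initial value $x_0$ cancel in the difference, so
\[
\Delta_t=\int_0^t\bigl(g_2(s)f_2(x^2_s)-g_1(s)f_1(x^1_s)\bigr)ds+\int_0^t\bigl(h_2(x^2_s)-h_1(x^1_s)\bigr)ds .
\]
The integrand is continuous in $s$ because all the functions involved are continuous. Hence $\Delta$ is a $C^1$ function on $[0,\infty)$ with $\Delta_0=0$.

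\textbf{Small times.} At $s=0$ we have $x^1_0=x^2_0=x_0$, so
\[
\Delta'(0)=g_2(0)f_2(x_0)-g_1(0)f_1(x_0)+h_2(x_0)-h_1(x_0).
\]
By (a) and (b), $g_2(0)f_2(x_0)>g_1(0)f_1(x_0)$, since both factors are positive and strictly ordered. By (c), $h_2(x_0)-h_1(x_0)\ge 0$. Therefore $\Delta'(0)>0$, and since $\Delta_0=0$, there exists $\delta>0$ with $\Delta_t>0$ for all $t\in(0,\delta]$.

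\textbf{Contradiction argument.} Suppose $\Delta_t\le 0$ for some $t>0$. Let
\[
\tau=\inf\{t>0:\Delta_t\le 0\}.
\]
Then $\tau\ge\delta>0$, $\Delta_\tau=0$ by continuity, and $\Delta_t>0$ on $(0,\tau)$. At time $\tau$ we have $x^1_\tau=x^2_\tau=:y$, so, exactly as at time $0$,
\[
\Delta'(\tau)=g_2(\tau)f_2(y)-g_1(\tau)f_1(y)+h_2(y)-h_1(y)>0.
\]
But $\Delta$ is positive on $(0,\tau)$ and vanishes at $\tau$, so the left difference quotients $(\Delta_\tau-\Delta_{\tau-h})/h$ are negative for small $h>0$. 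This forces $\Delta'(\tau)\le 0$, a contradiction. Hence $x^1_t<x^2_t$ for all $t>0$.

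\textbf{Main obstacle.} The one point to check carefully is that $\Delta$ is genuinely differentiable, so that the pointwise derivative comparison is legitimate. This holds because $b(t)$ cancels exactly in the difference and every other ingredient is continuous; no regularity of $b$ is needed, which is essential since $b$ will be a fractional Brownian path. The strict inequalities in (a) and (b), together with positivity, are what make $\Delta'>0$ at any contact point. Condition (c) only needs the non-strict inequality, because at a contact point the two $h$-terms are evaluated at the same $y$.
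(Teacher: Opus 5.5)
Your proof is correct and is essentially the paper's argument. The paper likewise notes that $\delta=x^2-x^1$ is differentiable with $\delta(0)=0$ and $\delta'(0)>0$, takes $t_0$ to be the first time positivity fails, and gets a contradiction because $\delta(t_0)=0$ together with $\delta'(t_0)>0$ would force $\delta<0$ just to the left of $t_0$; this is the same as your left-difference-quotient step, and your write-up is slightly more explicit about why $\delta$ is $C^1$ and why $\Delta_\tau=0$.
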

\begin{proof}
 Denote $\de(t)=x^2_t-x^1_t$. Then, we have
\[
\de(t)=\int_0^t(g_2(s)f_2(x^2_s)-g_1(s)f_1(x^1_s))ds+\int_0^t(h_2(x^2_s)-h_1(x^1_s))ds.
\]
Note that the function $\de(\cdot)$ is differentiable and $\de(0)=0$. Note also that
\begin{align*}
\de'(0)&=\left(g_2(0)f_2(x_0)-g_1(0)f_1(x_0)\right)+\left(h_2(x_0)-h_1(x_0)\right)\\
&=g_2(0)(f_2(x_0)-f_1(x_0))+(g_2(0)-g_1(0))f_1(x_0)+\left(h_2(x_0)-h_1(x_0)\right)>0.
\end{align*}
So, there exists some extended real number $t_0\in (0,+\infty]$ such that 
\[
\de(t)>0, \ \forall t\in (0, t_0).
\]
We can easily see that 
\begin{equation}\label{t-0}
t_0=\sup\{t>0: \text{ for all } s\in (0, t), \de(s)>0\}.
\end{equation}
Assume that $t_0<+\infty$.  Since the function $\de$ is differentiable and hence continuous, we must have $\de(t_0)=0$, that is, $x^1_{t_0}=x^2_{t_0}=x$ for some $x\in \mathbb{R}$. Notice that 
\begin{align*}
\de'(t_0)&=\left(g_2(t_0)f_2(x)-g_1(t_0)f_1(x)\right)+\left(h_2(x)-h_1(x)\right)\\
&=g_2(t_0)(f_2(x)-f_1(x))+(g_2(t_0)-g_1(t_0))f_1(x)+\left(h_2(x)-h_1(x)\right)>0.
\end{align*}
Thus, there exists some $\vare_0>0$ such that on $(t_0-\vare_0, t_0)$ we have $\de(t)<0$, which contradicts the definition of $t_0$ in \eqref{t-0}. So, we must have $t_0=+\infty$. Therefore, $\de(t)>0$ for all $t>0$, that is, $x^1_t<x^2_t$ for all $t>0$.
\end{proof}\\

\bigskip

In the sequel, we will fix $H\in (0,\frac12)$. For any $0<\vare_2<\vare_1$, let $b(t)=\si B_t^H$ and 
\[
g_i(t)=(t+\vare_i)^{2H-1}, \ f_i(x)=\frac{a}{x\boldsymbol{1}_{\{x>0\}}+\vare_i},  \text{ and } h_i(x)=-bx, \  i=1,2.
\] Note that these functions satisfy 
the monotonicity conditions in Proposition \ref{prop-comp}. Applying Proposition \ref{prop-comp}, we have, for all $\om\in \Om$ and all $t\in (0,T]$, 
\[
X^{\vare_2}_t(\om)>X^{\vare_1}_t(\om).
\]

Since we consider a version of fractional Brownian motion satisfying (a) and (b) for all $\om\in \Om$, we will fix $\om\in \Om$ and omit it in all the proofs of the results whenever the continuity or H\"older continuity of fractional Brownian motion is concerned in this paper.

Next, we will show that the collection $\{X^{\vare}\}$ has a uniform upper bound and hence we can define
\[
X_t=\lim\limits_{\vare\to 0}X_t^\vare<+\infty,\ \forall t\in[0,T].
\]
\begin{theorem}\label{thm-2-2}
There exists a positive constants $C=C(T, H,X_0, a)$ such that for all $\vare>0$ and all $t\in[0,T]$ we have
\begin{equation}\label{upper}
X_t^\vare\leq C+2\si\sup\limits_{s\in[0,T]}|B^H_s|.
\end{equation}
\end{theorem}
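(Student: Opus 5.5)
Fix $\om$ and $\vare>0$, and write $M=\sup_{s\in[0,T]}|B^H_s|$. The plan is an excursion argument. The only term that can push $X^\vare$ up is the singular drift, and it is small whenever $X^\vare$ is large: on $\{X^\vare_s\geq 1\}$ we have $\frac{a(s+\vare)^{2H-1}}{X^\vare_s+\vare}\leq a(s+\vare)^{2H-1}\leq a s^{2H-1}$, since $2H-1<0$. This bound is integrable on $[0,T]$ and does not depend on $\vare$. The drift $-bX^\vare_s$ is nonpositive whenever $X^\vare_s\geq 0$. We will take
\[
C=\max(X_0,1)+\frac{a}{2H}T^{2H},
\]
which depends only on $T,H,X_0,a$.

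Now fix $t\in[0,T]$ and suppose $X^\vare_t>\max(X_0,1)$; otherwise \eqref{upper} is trivial. Let
\[
\tau=\sup\{s\in[0,t]: X^\vare_s\leq \max(X_0,1)\}.
\]
This set contains $s=0$, so $\tau$ is well defined. By continuity of $X^\vare$ we get $X^\vare_\tau=\max(X_0,1)$ and $X^\vare_s>\max(X_0,1)\geq 1$ for all $s\in(\tau,t]$.

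Subtract \eqref{sde-2} at time $\tau$ from \eqref{sde-2} at time $t$:
\[
X^\vare_t=X^\vare_\tau+a\int_\tau^t\frac{(s+\vare)^{2H-1}}{X^\vare_s+\vare}\,ds-b\int_\tau^tX^\vare_s\,ds+\si(B^H_t-B^H_\tau).
\]
On $(\tau,t]$ the process is positive, so the indicator equals $1$. Because $b\geq0$ and $X^\vare_s>0$ there, the $b$-term is $\leq 0$. The singular term is at most $a\int_0^T s^{2H-1}ds=\frac{a}{2H}T^{2H}$. The noise term satisfies $|\si(B^H_t-B^H_\tau)|\leq 2\si M$. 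Putting these together gives
\[
X^\vare_t\leq \max(X_0,1)+\frac{a}{2H}T^{2H}+2\si M,
\]
which is \eqref{upper}.

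The one delicate point is that the drift must be controlled uniformly in $\vare$ despite the time singularity. This is handled by bounding $(s+\vare)^{2H-1}\leq s^{2H-1}$, which is integrable because $H>0$, and by using the level $1$ so that the denominator is at least $1$. No H\"older estimate on $B^H$ is needed; property (a) suffices. This also explains the factor $2\si$ in the statement: it comes from the increment $B^H_t-B^H_\tau$.
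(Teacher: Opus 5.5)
Your proof is correct and follows the paper's last-exit argument. On the final excursion of $X^\vare$ above a fixed level before time $t$, you bound the singular drift uniformly in $\vare$, discard the nonpositive $-b$ term, and control the noise increment by $2\sigma\sup_{s\in[0,T]}|B^H_s|$. The differences are cosmetic: your level $\max(X_0,1)\geq X_0$, in place of the paper's $X_0/2$, guarantees that $s=0$ lies in the set defining $\tau$, which removes the paper's case split, and you bound $(s+\vare)^{2H-1}\le s^{2H-1}$ directly instead of integrating exactly and using $|u^{2H}-v^{2H}|\le|u-v|^{2H}$.
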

\begin{proof}
Define
\[
\tau_1^\vare=\sup\left\{t\in[0,T]:  X^\vare_s\geq \frac{X_0}{2}, \ \forall s\in [0,t]\right\}.
\]
Since $X^\vare_0=X_0$ and $X^\vare$ is continuous, we have $0<\tau_1^\vare\leq T$ and $X^\vare_t\geq \frac{X_0}{2}$ for all $t\in [0,\tau_1^\vare]$.

For $t\in [0,\tau_1^\vare]$, we obtain
\begin{align}\label{upper-1}
X^\vare_t&=X_0+a\int_0^t\frac{(s+\vare)^{2H-1}}{X_s^\vare\boldsymbol{1}_{\{X^\vare_s>0\}}+\vare}ds-b\int_0^tX^\vare_sds+\si B^H_t\notag\\
&\leq X_0+\frac{2a}{X_0}\int_0^t (s+\vare)^{2H-1}ds+\si\sup\limits_{0\leq s\leq T}|B_s^H|\notag\\
&=X_0+\frac{a}{HX_0}[(t+\vare)^{2H}-\vare^{2H}]+\si\sup\limits_{0\leq s\leq T}|B_s^H|\notag\\
&\leq X_0+\frac{aT^{2H}}{HX_0}+\si\sup\limits_{0\leq s\leq T}|B_s^H|,
\end{align}
since for $0<H<\frac12$ the inequality $|u^{2H}-v^{2H}|\leq |u-v|^{2H}$ holds for any $u,v\geq 0$. \\

\textbf{Case 1:} $\tau_1^\vare =T$. In this case, the desired result \eqref{upper} follows from \eqref{upper-1} with $C=X_0+\frac{aT^{2H}}{HX_0}$.\\

\textbf{Case 2:} $\tau_1^\vare<T$. For any $t\in(\tau_1^\vare,T]$.
by the definition of $\tau_1^\vare$, there must exist some $s\in (\tau_1^\vare,t)$ such that $X^\vare_s<\frac{X_0}{2}$. 

If  $X^\vare_t\leq \frac{X_0}{2}$,  we obtain \eqref{upper} with $C=\frac{X_0}{2}$.

If $X^\vare_t> \frac{X_0}{2}$, define
\[
\tau^\vare_2=\sup\left\{s\in (\tau_1^\vare, t): X_s^\vare\leq \frac{X_0}{2} \right\}.
\]
Since $X^\vare$ is continuous, we know that $\tau_2^\vare\in (\tau_1^\vare, t)$, $X^\vare_{\tau_2^\vare}=\frac{X_0}{2}$, and $X_s^\vare>\frac{X_0}{2}$ for all $s\in (\tau_2^\vare, t]$. Thus, we obtain
\begin{align}\label{upper-2}
X_t^\vare&=X_t^\vare-X_{\tau^\vare_2}^\vare+X_{\tau^\vare_2}^\vare\notag\\
&=a\int_{\tau^\vare_2}^t\frac{(s+\vare)^{2H-1}}{X_s^\vare\boldsymbol{1}_{\{X_s^\vare>0\}}+\vare}ds-b\int_{\tau^\vare_2}^tX_s^\vare ds+\sigma(B^H_t-B^H_{\tau_2^\vare})+\frac{X_0}{2}\notag\\
&\leq\frac{2a}{X_0}\int_{\tau^\vare_2}^t(s+\vare)^{2H-1}ds+2\si \sup\limits_{0\leq s\leq T}|B_s^H|+\frac{X_0}{2}\notag\\
&=\frac{a}{HX_0}[(t+\vare)^{2H}-(\tau_2^\vare+\vare)^{2H}]+2\si \sup\limits_{0\leq s\leq T}|B_s^H|+\frac{X_0}{2}\notag\\
&\leq \frac{a}{HX_0}(t-\tau_2^\vare)^{2H}+2\si \sup\limits_{0\leq s\leq T}|B_s^H|+\frac{X_0}{2}\notag\\
&\leq \frac{aT^{2H}}{HX_0}+2\si \sup\limits_{0\leq s\leq T}|B_s^H|+\frac{X_0}{2}.
\end{align}
So, in this case the desired result \eqref{upper} follows from \eqref{upper-2} with $C=\frac{X_0}{2}+\frac{aT^{2H}}{HX_0}$.

Therefore, for any $t\in [0,T]$, combining all the cases and taking $C=X_0+\frac{aT^{2H}}{HX_0}$, we can obtain \eqref{upper}.
\end{proof}

\begin{remark}\label{rem-1}
It implies from the monotonicity of the family $\{X^\vare\}_{\vare>0}$ and the upper bound \eqref{upper} that  the limit $\lim\limits_{\vare\to 0}X_t^\vare$ exists for each $t\in [0,T]$ and we denote 
\begin{equation}\label{limit}
X_t=\lim\limits_{\vare\to 0}X_t^\vare, \ \forall t\in [0,T].
\end{equation}
From the definition of $X$ and \eqref{upper} we obtain
\begin{equation}\label{limit-bound}
X_t^\vare< X_t\leq C+2\si \sup\limits_{s\in[0,T]}|B^H_s|, \ \forall t\in [0,T].
\end{equation}
for all $\vare>0$. The process $X$ as the limit of continuous processes $X^\vare$ is progressively measurable on $[0,T]\times\Om$. By the monotone convergence theorem, the process $X$ is Lebesgue integrable on arbitrary interval $[0,t]\subseteq[0,T]$, and 
\begin{equation}\label{limit-int}
\int_0^tX_sds=\lim\limits_{\vare\to0}\int_0^t X_s^\vare ds<\infty, \ \forall t\in [0,T].
\end{equation}
In the sequel, the integral of $X$ will be considered as the Lebesgue integral at first and later we will prove that $X$ is Riemann integrable on any interval $[0,t]$.
\end{remark}
\begin{remark}
Since $T>0$ is arbitrary, for any $t\geq 0$ we can choose large enough $T>0$ such that $t\leq  T$ and consider $X^\vare=\{X^\vare_s, \ s\in [0,T]\}$ for any $\vare>0$. Then, we can define 
\begin{equation}\label{limit-1}
X_t=\lim\limits_{\vare\to 0}X^\vare_t.
\end{equation}
Hence, we can obtain a process $X=\{X_t, t\geq 0\}$.
\end{remark}
\setcounter{equation}{0}
\section{Properties of the approximations and the limit process}

Let us first prove the following property for the approximation properties.
\begin{proposition}\label{prop-Leb}
Let $T>0$ be fixed and let $\la$ denote the Lebesgue measure on $[0,T]$. Then,
\begin{equation}\label{limit-Leb}
\lim\limits_{\vare\to 0}\la\left(\{t\in[0,T]: X^\vare_t\leq 0\}\right)=0.
\end{equation}
\end{proposition}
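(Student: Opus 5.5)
The plan is to bound the total mass of the drift term in \eqref{sde-2} uniformly in $\vare$, and then observe that this drift is of order $1/\vare$ on the set where $X^\vare\leq 0$. Fix $T>0$ and $\om\in\Om$, and restrict to $\vare\in(0,1]$. Write $A_\vare=\{t\in[0,T]: X^\vare_t\leq 0\}$ and $M=\sup_{s\in[0,T]}|B^H_s|$, which is finite by property (a).

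\textbf{Step 1 (uniform bound on the drift integral).} Evaluating \eqref{sde-2} at $t=T$ and solving for the singular drift gives
\[
a\int_0^T\frac{(s+\vare)^{2H-1}}{X_s^\vare\boldsymbol{1}_{\{X^\vare_s>0\}}+\vare}ds
= X^\vare_T-X_0+b\int_0^TX^\vare_sds-\si B^H_T .
\]
By Theorem \ref{thm-2-2}, $X^\vare_s\leq C+2\si M$ for all $s\in[0,T]$, with $C$ independent of $\vare$. Since $b\geq 0$, we get $b\int_0^TX^\vare_s ds\leq bT(C+2\si M)$; negative values of $X^\vare$ only make this term smaller, so no lower bound on $X^\vare$ is needed. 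Therefore the right-hand side is at most
\[
K:=(1+bT)(C+2\si M)+\si M ,
\]
which does not depend on $\vare$.

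\textbf{Step 2 (lower bound on $A_\vare$).} For $s\in A_\vare$ the denominator of the integrand equals exactly $\vare$. Since $2H-1<0$ and $s+\vare\leq T+1$, we have $(s+\vare)^{2H-1}\geq (T+1)^{2H-1}$. The integrand is positive everywhere, so we may drop the part of the integral over $[0,T]\setminus A_\vare$, which yields
\[
\frac{a(T+1)^{2H-1}}{\vare}\,\la(A_\vare)\leq a\int_{A_\vare}\frac{(s+\vare)^{2H-1}}{\vare}ds\leq K .
\]

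\textbf{Step 3 (conclusion).} Rearranging gives $\la(A_\vare)\leq K(T+1)^{1-2H}\vare/a$, which tends to $0$ as $\vare\to0$, and this proves \eqref{limit-Leb} for every $\om$.

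The only delicate point is Step 1: the bound on the drift integral must be uniform in $\vare$. This is exactly what Theorem \ref{thm-2-2} provides, together with the sign of the term $-b\int X^\vare$. I do not expect any further obstacle beyond checking measurability of $A_\vare$, which is immediate because $X^\vare$ is continuous.
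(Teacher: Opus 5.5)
Your proof is correct and uses the same mechanism as the paper's: on $\{X^\vare\le 0\}$ the integrand equals $(s+\vare)^{2H-1}/\vare$, so a bounded drift integral forces that set to have small measure. The paper gets the boundedness indirectly, from the finite limit \eqref{limit-RHS} (via \eqref{limit}, \eqref{limit-int} and monotone convergence) and a contradiction along a sequence; you bound the drift integral directly with Theorem \ref{thm-2-2}, which avoids the limit process and gives the explicit rate $\la(A_\vare)\le K(T+1)^{1-2H}\vare/a$.
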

\begin{proof}
 For any $t\in [0,T]$, it follows from \eqref{sde-2} that 
\begin{equation}\label{sde-3}
X^\vare_t-X_0+b\int_0^tX^\vare_sds-\si B_t^H=a\int_0^t\frac{(s+\vare)^{2H-1}}{X_s^\vare\boldsymbol{1}_{\{X^\vare_s>0\}}+\vare}ds.
\end{equation}
Equations \eqref{limit} and \eqref{limit-int} yield
\begin{equation}\label{limit-LHS}
\lim\limits_{\vare\to 0}\left(X^\vare_t-X_0+b\int_0^tX^\vare_sds-\si B_t^H\right)=X_t-X_0+b\int_0^tX_sds-\si B_t^H.
\end{equation}
 Hence, the limit of the right-hand side of \eqref{sde-3} exists and 
 \begin{equation}\label{limit-RHS}
 \lim\limits_{\vare\to 0} a\int_0^t\frac{(s+\vare)^{2H-1}}{X_s^\vare\boldsymbol{1}_{\{X^\vare_s>0\}}+\vare}ds\in [0, \infty), \ \forall t\in[0,T].
 \end{equation}
 
 Assume that \eqref{limit-Leb} does not hold. Then, there exist a sequence $\{\vare_n>0\}$ and some $\delta_0>0$ such that $\{\vare_n\}$ decreases to $0$ and 
 \[
 \la\left(\{t\in[0,T]: X^{\vare_n}_t\leq 0\}\right)\geq \delta_0>0.
 \] 
 Then,
 \begin{align}\label{ineq-contra}
 &\int_0^T\frac{(s+\vare_n)^{2H-1}}{X_s^{\vare_n}\boldsymbol{1}_{\{X^{\vare_n}_s>0\}}+\vare_n}ds\notag\\
 =&\int_{\left\{s\in[0,T]: X^{\vare_n}_s\leq 0\right\}}\frac{(s+\vare_n)^{2H-1}}{X_s^{\vare_n}\boldsymbol{1}_{\{X^{\vare_n}_s>0\}}+\vare_n}ds+\int_{\left\{s\in[0,T]: X^{\vare_n}_s> 0\right\}}\frac{(s+\vare_n)^{2H-1}}{X_s^{\vare_n}\boldsymbol{1}_{\{X^{\vare_n}_s>0\}}+\vare_n}ds\notag\\
 \geq & \int_{\{s\in[0,T]: X^{\vare_n}_s\leq 0\}}\frac{(s+\vare_n)^{2H-1}}{X_s^{\vare_n}\boldsymbol{1}_{\{X^{\vare_n}_s>0\}}+\vare_n}ds\notag\\
 =&\int_{\{s\in[0,T]: X^{\vare_n}_s\leq 0\}}\frac{(s+\vare_n)^{2H-1}}{\vare_n}ds\notag\\
 \geq &\frac{(T+\vare_n)^{2H-1}\la\left(\{t\in[0,T]: X^{\vare_n}_t\leq 0\}\right)}{\vare_n}\notag\\
 \geq& \frac{(T+\vare_n)^{2H-1}\delta_0}{\vare_n}.
 \end{align}
 Noticing that $\lim\limits_{n\to\infty}\frac{\delta_0(T+\vare_n)^{2H-1}}{\vare_n}=+\infty$, we obtain from \eqref{ineq-contra} that 
 \[
 \lim\limits_{n\to\infty} a\int_0^T\frac{(s+\vare_n)^{2H-1}}{X_s^{\vare_n}\boldsymbol{1}_{\{X^{\vare_n}_s>0\}}+\vare_n}ds=+\infty,
 \]
 which contradicts \eqref{limit-RHS}. Therefore, the limit \eqref{limit-Leb} must hold true.
 \end{proof}
\begin{corollary}\label{coro-1}
The process $X$ is positive almost everywhere on $[0,\infty)$.
Moreover, we have
\[
\{t\geq 0: X_t\leq 0\}=\{t\geq 0: X_t=0\}.
\]
So, the process $X$ is nonnegative on $[0,\infty)$.

\end{corollary}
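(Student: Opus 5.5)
We will derive everything from Proposition \ref{prop-Leb} together with the strict monotonicity $X^\vare_t<X_t$ from \eqref{limit-bound}. Fix $T>0$; since $T$ is arbitrary, it suffices to work on $[0,T]$.

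\emph{Positivity almost everywhere.} For every $\vare>0$ and every $t\in[0,T]$ we have $X^\vare_t<X_t$. Hence $X_t\leq 0$ implies $X^\vare_t<0$, which gives the inclusion
\[
\{t\in[0,T]: X_t\leq 0\}\subseteq\{t\in[0,T]: X^\vare_t\leq 0\}, \quad \forall \vare>0.
\]
By monotonicity of Lebesgue measure, $\la(\{t\in[0,T]:X_t\leq 0\})\leq \la(\{t\in[0,T]:X^\vare_t\leq 0\})$ for every $\vare>0$. Letting $\vare\to0$ and using \eqref{limit-Leb}, the left-hand side is $0$. (Measurability of the set follows from the progressive measurability of $X$ noted in Remark \ref{rem-1}.) Taking a union over $T=1,2,\dots$ then shows that $X$ is positive for almost every $t\geq0$.

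\emph{The set identity.} This is the step that needs an actual argument, namely that $X_t<0$ never happens. Suppose instead that $X_{t_0}<0$ for some $t_0$; then $t_0>0$ because $X_0>0$. I would exploit the fact that each $X^\vare$ is continuous and strictly below $X$. Choose $\vare_0>0$ small and put $\eta=|X_{t_0}|/2$. Then $X^{\vare}_{t_0}<X_{t_0}=-2\eta$ for every $\vare>0$. By continuity of $X^{\vare}$ there is a neighborhood of $t_0$ on which $X^\vare<-\eta$, but its length may depend on $\vare$, so this alone does not contradict \eqref{limit-Leb}.

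To get a uniform-in-$\vare$ neighborhood, I would use the equation \eqref{sde-2} directly. Because the singular drift term is nonnegative, for $t\leq t_0$ we have
\[
X^\vare_{t_0}-X^\vare_t\geq -b\int_t^{t_0}X^\vare_s ds+\si(B^H_{t_0}-B^H_t).
\]
The bound \eqref{upper} controls $X^\vare$ from above, and the Hölder estimate \eqref{fbm-hold} controls the increment of $B^H$. It follows that
\[
X^\vare_t\leq X^\vare_{t_0}+bK(t_0-t)+\si C_{\ga,T}(t_0-t)^{H-\ga},
\]
where $K$ is the bound in \eqref{upper}. If $b\int_t^{t_0}X^\vare_s ds$ were negative this estimate could fail, but $-b\int_t^{t_0}X^\vare_sds\geq -bK(t_0-t)$ is precisely the direction we need. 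Consequently there is a $\delta>0$, independent of $\vare$, such that $X^\vare_t<-\eta$ for all $t\in(t_0-\delta,t_0]$. Then $\la(\{X^\vare\leq0\})\geq\delta$ for all $\vare$, which contradicts \eqref{limit-Leb}. Hence $X_t\geq0$ for all $t$, which gives $\{X_t\leq0\}=\{X_t=0\}$ and the nonnegativity.

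I expect the uniform-in-$\vare$ interval to be the main obstacle, and the key observation that resolves it is that the positive drift term can simply be dropped when looking backward in time from $t_0$.
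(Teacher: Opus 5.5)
Your proof is correct. The a.e.\ positivity part is essentially the paper's argument. The paper writes $\{t\in[0,T]:X^\vare_t\le0\}\downarrow\{t\in[0,T]:X_t\le0\}$ and uses continuity of measure; your inclusion plus monotonicity of $\la$ is equivalent.

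For the set identity you take a genuinely different route.

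\emph{The paper's route.} It argues pathwise and does not use Proposition \ref{prop-Leb} at this stage. Given $X_\tau\le0$, it brackets $\tau$ by the last zero $\tau_-^\vare$ of $X^\vare$ before $\tau$ and the first zero $\tau_+^\vare$ after it. On this negative excursion the singular drift equals $a(s+\vare)^{2H-1}/\vare\ge a(\tau_+^1+\vare)^{2H-1}/\vare$, while $-bX^\vare\ge0$. Minimizing $u\mapsto \frac{c}{\vare}u-C\si u^\beta$ then gives $X^\vare_\tau\ge -c'\vare^{\beta/(1-\beta)}$, hence $X_\tau\ge0$.

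\emph{Your route.} You argue by contradiction against Proposition \ref{prop-Leb}. Looking backward from $t_0$, you discard the nonnegative singular drift. You control the rest with the $\vare$-uniform bound of Theorem \ref{thm-2-2} and the H\"older estimate \eqref{fbm-hold}. This pins every $X^\vare$ below $-\eta$ on a fixed interval $(t_0-\delta,t_0]$.

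\emph{Comparison.} Your argument is shorter and needs neither the excursion endpoints (the paper takes $\tau_+^\vare<\infty$ for granted) nor the optimization. It can also be closed more directly: letting $\vare\to0$ gives $X\le-\eta$ on $(t_0-\delta,t_0]$, which contradicts the a.e.\ positivity you had just proved. The paper's argument, in exchange, is independent of Proposition \ref{prop-Leb} and quantitative. It shows that the negative excursion of $X^\vare$ around a zero of $X$ has depth $O(\vare^{\beta/(1-\beta)})$.

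One small correction: your side remark about $b\int_t^{t_0}X^\vare_s\,ds$ possibly being negative is misplaced. Since $b\ge0$ and $X^\vare\le K$, the bound $b\int_t^{t_0}X^\vare_s\,ds\le bK(t_0-t)$ holds regardless of sign. A negative integral would only strengthen your estimate.
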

\begin{proof}
For any $T>0$, by the monotonicity of $\{X^\vare\}$ we have
\[
\left\{t\in[0,T]: X^\vare_t\leq 0\right\}\downarrow \left\{t\in[0,T]: X_t\leq 0\right\},
\]
as $\vare\downarrow 0$.

Then, it implies from Proposition \ref{prop-Leb} and the monotone convergence theorem that
\[
\la\left(\{t\in[0,T]: X_t\leq 0\}\right)=\lim\limits_{\vare\to0}\la\left(\{t\in[0,T]: X^\vare_t\leq 0\}\right)=0.
\]
Namely, we have $X_t>0$ almost everywhere on $[0,T]$, a.s. Since $T$ is arbitrary, we obtain $X_t>0$ almost everywhere on $[0,\infty)$.

Now, let us prove $\{t\geq 0: X_t\leq 0\}=\{t\geq 0: X_t=0\}$. Fix $\om\in \Om$ and we will omit $\om$ in the following context. 

It is obvious that $\{t\geq 0: X_t=0\}\subseteq\{t\geq 0: X_t\leq 0\}$.

Next, we shall prove $\{t\geq 0: X_t\leq 0\}\subseteq\{t\geq 0: X_t=0\}$. Assume that there is some $\tau>0$ such that $X_\tau\leq 0$. In the following we consider $0<\vare\leq 1$. Then, we have
\[
X^\vare_\tau<X_\tau\leq 0, \ \forall \vare\in(0,1].
\]
Now, define
\begin{align*}
\tau_{-}^\vare&=\sup\{t\in(0,\tau): X^\vare_t>0\},\\
\tau_{+}^\vare&=\inf\{t\in(\tau, \infty): X^\vare_t>0\}.
\end{align*}
It implies from the continuity of $X^\vare$ that $0<\tau_{-}^\vare<\tau<\tau_{+}^\vare<\infty$, $X^\vare_{\tau_{-}^\vare}=0$ and $X^\vare_{\tau_{+}^\vare}=0$ for all $0<\vare\leq 1$. By the monotonicity of $X^\vare$ we obtain, for any $0<\vare_2<\vare_1\leq 1$,
\[
\tau_{-}^{1}\leq\tau_{-}^{\vare_1}\leq \tau_{-}^{\vare_2} <\tau_{+}^{\vare_2}\leq \tau_{+}^{\vare_2} \leq \tau_{+}^1.
\]
It is easy to see that 
\[
X^\vare_t< 0, \ \forall t\in (\tau_{-}^{\vare}, \tau_{+}^{\vare}).
\]
It deduces from \eqref{fbm-hold} that there exist $\beta\in (0,H)$ and $C>0$ such that
\[
|B^H_s-B^H_t|\leq C|t-s|^\beta, \ \forall s, t\in [0,\tau^1_{+}].
\]
Then, we have, for $t\in (\tau_{-}^\vare, \tau_{+}^\vare)$,
\begin{align*}
0>X^\vare_t&=X^\vare_t-X^\vare_{\tau_{-}^\vare}\\
&=a\int_{\tau_{-}^\vare}^t\frac{(s+\vare)^{2H-1}}{X^\vare_s\boldsymbol{1}_{\{X_s^\vare>0\}}+\vare}ds-b\int_{\tau_{-}^\vare}^t X^\vare_sds+\si (B^H_t-B^H_{\tau_{-}^\vare})\\
&>\frac{a(\tau_{+}^1+\vare)^{2H-1}}{\vare}(t-\tau_{-}^\vare)-C\si (t-\tau_{-}^\vare)^\beta.
\end{align*}
Thus, in order to prove $X_\tau=0$, it suffices to prove that 
\[
h(\vare)=\min\limits_{t\in [\tau_{-}^\vare, \tau_{+}^\vare]}\frac{a(\tau_{+}^1+\vare)^{2H-1}}{\vare}(t-\tau_{-}^\vare)-C\si (t-\tau_{-}^\vare)^\beta\to 0,
\]
as $\vare\to 0$. Put
\[
g_\vare(t)=\frac{a(\tau_{+}^1+\vare)^{2H-1}}{\vare}(t-\tau_{-}^\vare)-C\si (t-\tau_{-}^\vare)^\beta,\ t\geq \tau_{-}^\vare.
\]
Then,
\begin{align*}
g_\vare(\tau_{-}^\vare)&=0,\\
g'_\vare(t)&=\frac{a(\tau_{+}^1+\vare)^{2H-1}}{\vare}-C\si\beta  (t-\tau_{-}^\vare)^{\beta-1},\\
g''_\vare(t)&=-C\si\beta (\beta-1)(t-\tau_{-}^\vare)^{\beta-2}>0, \ \forall t>\tau_{-}^\vare.
\end{align*}
Setting $g'_\vare(t)=0$, we can obtain the solution
\[
t^\ast=\tau_{-}^\vare+C_0(\vare)\vare^{\frac{1}{1-\beta}},
\]
where $C_0(\vare)=\left(\frac{C\si\beta}{a(\tau_{+}^1+\vare)^{2H-1}}\right)^{\frac{1}{1-\beta}}$.
Noting that $g''_\vare(t^\ast)>0$, by the second derivative test, $g_\vare$ attains its minimum at $t^\ast$ and we have
\begin{align}\label{bound-h}
0\geq h(\vare)\geq g_\vare(t^\ast)&=a(\tau_{+}^1+\vare)^{2H-1}C_0(\vare)\vare^{\frac{\beta}{1-\beta}}-C\si C_0(\vare)^\beta\vare^{\frac{\beta}{1-\beta}}\notag\\
&=\left[a(\tau_{+}^1+\vare)^{2H-1}C_0(\vare)-C\si C_0(\vare)^\beta\right]\vare^{\frac{\beta}{1-\beta}}.
\end{align}
Notice that 
\begin{align*}
&\lim\limits_{\vare\to 0}\left[a(\tau_{+}^1+\vare)^{2H-1}C_0(\vare)-C\si C_0(\vare)^\beta\right]\\
=&a^{-\frac{\beta}{1-\beta}}\left(\tau_{+}^1\right)^{\frac{\beta(1-2H)}{1-\beta}}(C\si\beta)^{\frac{1}{1-\beta}}-C\si \left(\frac{C\si\beta}{a(\tau_{+}^1)^{2H-1}}\right)^{\frac{\beta}{1-\beta}},
\end{align*}
and 
\[
\lim\limits_{\vare\to 0}\vare^{\frac{\beta}{1-\beta}}=0.
\]
Thus, we can obtain $\lim\limits_{\vare\to 0}g_\vare(t^\ast)=0$. Together with \eqref{bound-h}, we have $\lim\limits_{\vare\to 0} h(\vare)=0$. Hence, $\{t\geq 0: X_t\leq 0\}\subseteq\{t\geq 0: X_t=0\}$.

So, we get $\{t\geq 0: X_t\leq 0\}=\{t\geq 0: X_t=0\}$.

Therefore, the process $X$ is nonnegative on $[0,\infty)$.\end{proof}
\begin{corollary}\label{coro-2}
For any $t>0$, we have
\[
\int_0^t\frac{s^{2H-1}}{X_s}ds\in (0, \infty).
\]
\end{corollary}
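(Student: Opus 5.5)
We will prove finiteness with Fatou's lemma applied to the approximating integrands, and positivity from Corollary \ref{coro-1}. Fix $t>0$ and choose $T\geq t$. By Corollary \ref{coro-1}, $X_s>0$ for almost every $s\in[0,t]$. Hence the integrand $\frac{s^{2H-1}}{X_s}$ is a well-defined, strictly positive, measurable function almost everywhere on $[0,t]$; it equals $+\infty$ only on a null set, which does not affect the Lebesgue integral. Positivity of the integral follows at once: a strictly positive function on a set of positive measure has strictly positive integral.

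For finiteness, look at the integrand of \eqref{sde-2}. For a.e.\ $s$ we have $X_s>0$. Monotonicity gives $X^\vare_s\uparrow X_s$ as $\vare\downarrow 0$, so for $\vare$ small enough $X^\vare_s>0$, and then
\[
\frac{(s+\vare)^{2H-1}}{X^\vare_s\boldsymbol{1}_{\{X^\vare_s>0\}}+\vare}=\frac{(s+\vare)^{2H-1}}{X^\vare_s+\vare}\longrightarrow \frac{s^{2H-1}}{X_s}
\]
for a.e.\ $s\in(0,t]$. The integrands are nonnegative, so Fatou's lemma gives
\[
a\int_0^t\frac{s^{2H-1}}{X_s}ds\leq \liminf_{\vare\to0} a\int_0^t\frac{(s+\vare)^{2H-1}}{X^\vare_s\boldsymbol{1}_{\{X^\vare_s>0\}}+\vare}ds .
\]
By \eqref{sde-3} together with \eqref{limit-LHS}, the right-hand side equals
\[
X_t-X_0+b\int_0^tX_sds-\si B^H_t .
\]
This quantity is finite by \eqref{limit-bound}, \eqref{limit-int} and property (a) of $B^H$.

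The only delicate point is checking the pointwise convergence of the integrands, including the indicator. This works only on the full-measure set where $X_s>0$, which is exactly what Corollary \ref{coro-1} supplies. On that set, strict monotonicity of $X^\vare$ in $\vare$ together with $X^\vare_s\to X_s>0$ forces $X^\vare_s>0$ for all small $\vare$, so the indicator equals $1$ eventually. One could ask whether equality holds in place of Fatou's inequality; it is not needed here, since the statement only requires finiteness, and that is exactly what the inequality provides.
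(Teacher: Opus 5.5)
Your proposal is correct and takes essentially the same route as the paper. It applies Fatou's lemma to the nonnegative approximating integrands, uses Corollary \ref{coro-1} both for the a.e.\ pointwise convergence and for strict positivity, and invokes \eqref{sde-3}--\eqref{limit-RHS} to identify the limit of the approximating integrals with the finite quantity $X_t-X_0+b\int_0^tX_sds-\si B^H_t$. Your check that the indicator is eventually $1$ on $\{X_s>0\}$ simply spells out what the paper leaves implicit when it writes $\frac{s^{2H-1}}{X_s}=\liminf_{\vare\to0}\frac{(s+\vare)^{2H-1}}{X^\vare_s\boldsymbol{1}_{\{X^\vare_s>0\}}+\vare}$.
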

\begin{proof}
Noting that $ \frac{(s+\vare)^{2H-1}}{X^\vare_s\boldsymbol{1}_{\{X^\vare_s>0\}}+\vare}>0$ for all $\vare>0$ and all $s\in[0,t]$, by Fatou's lemma, Corollary \ref{coro-1}, and \eqref{sde-3}-\eqref{limit-RHS}, we obtain
\begin{align}\label{ineq-X}
0< a\int_0^t\frac{s^{2H-1}}{X_s}ds
=&a\int_0^t\liminf\limits_{\vare\to 0}\frac{(s+\vare)^{2H-1}}{X^\vare_s\boldsymbol{1}_{\{X^\vare_s>0\}}+\vare}ds\notag\\
\leq& \lim\limits_{\vare\to 0} a\int_0^t\frac{(s+\vare)^{2H-1}}{X_s^\vare\boldsymbol{1}_{\{X^\vare_s>0\}}+\vare}ds\notag\\
=&\lim\limits_{\vare\to 0}\left(X^\vare_t-X_0+b\int_0^tX^\vare_sds-\si B_t^H\right)\notag\\
=&X_t-X_0+b\int_0^tX_sds-\si B_t^H<\infty.
\end{align}
The proof is completed.
\end{proof}

\begin{corollary}
There exists a nonnegative process $L^H=\{L_t^H, t\geq 0\}$ such that
\begin{equation}\label{eq-X}
X_t=X_0+a\int_0^t\frac{s^{2H-1}}{X_s}dsds-b\int_0^tX_sds+\si B_t^H+L_t^H, \ \forall t\geq 0.
\end{equation}
\end{corollary}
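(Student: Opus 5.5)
The plan is to \emph{define} $L^H$ as the defect in Fatou's lemma and then check that this is well defined and nonnegative. For each $t\geq 0$ set
\[
L_t^H:=\lim\limits_{\vare\to 0} a\int_0^t\frac{(s+\vare)^{2H-1}}{X_s^\vare\boldsymbol{1}_{\{X^\vare_s>0\}}+\vare}ds-a\int_0^t\frac{s^{2H-1}}{X_s}ds .
\]
The first step is to note that this is a finite real number. The limit exists and lies in $[0,\infty)$ by \eqref{limit-RHS}, after choosing $T\geq t$ as in the remark following Remark \ref{rem-1}. The second integral is finite and positive by Corollary \ref{coro-2}. It is well defined because, by Corollary \ref{coro-1}, $X_s>0$ for almost every $s$, so the integrand $s^{2H-1}/X_s$ is defined a.e.

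The second step is to check that this definition does not depend on the choice of $T\geq t$. This holds because the solution $X^\vare$ of \eqref{sde-2} on $[0,T]$ is unique, so its restriction to $[0,T']$ for $T'<T$ coincides with the solution on $[0,T']$.

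The third step is nonnegativity, which is exactly the Fatou inequality in \eqref{ineq-X}. For each $s$ with $X_s>0$ we have $X^\vare_s\uparrow X_s$, hence $X^\vare_s\boldsymbol{1}_{\{X^\vare_s>0\}}+\vare\to X_s$ and $(s+\vare)^{2H-1}\to s^{2H-1}$ for $s>0$. The integrands are positive, so Fatou's lemma gives $a\int_0^t s^{2H-1}/X_s\,ds\leq\lim_{\vare\to0}(\cdots)$, that is, $L^H_t\geq 0$.

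The fourth step is the equation \eqref{eq-X} itself. Combining \eqref{sde-3} and \eqref{limit-LHS}, the limit in the definition equals $X_t-X_0+b\int_0^tX_sds-\si B^H_t$. Substituting this into the definition of $L^H_t$ and rearranging yields \eqref{eq-X}; the repeated ``$ds$'' in the displayed formula is a typo.

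As a byproduct, $L^H$ is nondecreasing in $t$. Indeed, for $t_1<t_2$ one applies the same Fatou argument on $[t_1,t_2]$, where the limit over that interval exists as a difference of the two existing limits, and obtains $L^H_{t_2}-L^H_{t_1}\geq0$. This is not required for the statement, but it costs nothing.

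There is no real obstacle here. The only point needing care is the existence of the limit of the approximating drift integrals for every $t$ rather than just $t=T$, and this is already supplied by \eqref{limit-RHS} together with the monotone convergence in \eqref{limit-int}.
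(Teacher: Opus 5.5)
Your proposal is correct and follows the paper's argument: the paper's one-line proof defines $L^H_t$ as the Fatou defect in \eqref{ineq-X}, i.e.\ $L^H_t=X_t-X_0-a\int_0^t s^{2H-1}/X_s\,ds+b\int_0^tX_s\,ds-\si B^H_t\geq 0$. This is exactly your definition once you substitute \eqref{sde-3}--\eqref{limit-LHS}. Your additional observation that $L^H$ is nondecreasing, obtained by applying Fatou on $[t_1,t_2]$, is also valid, though the statement does not need it.
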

\begin{proof}
The proof follows immediately from \eqref{ineq-X}.
\end{proof}

\bigskip
Before we discuss the trajectory properties of the limit process $X$, let us first provide a result about the regularity property of the approximations $X^\vare$ with respect to $\vare$.
\begin{theorem}\label{thm-3-5}
For any $\vare^\ast>0$ and for all $t\geq 0$, we have
\[
\lim\limits_{\vare\to\vare^\ast}X^\vare_t=X^{\vare^\ast}_t.
\]
\end{theorem}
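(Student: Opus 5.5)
Fix $\om\in\Om$ and $T>0$ with $t\le T$; we work pathwise on $[0,T]$. The plan is a Gronwall-type stability estimate for the deterministic integral equation \eqref{sde-2} with respect to the parameter $\vare$, near a fixed $\vare^\ast>0$. Restrict to $\vare\in[\vare^\ast/2,2\vare^\ast]$. The key observation is that for $\vare$ bounded away from $0$ the drift
\[
b^\vare(s,x)=\frac{a(s+\vare)^{2H-1}}{x\boldsymbol{1}_{\{x>0\}}+\vare}-bx
\]
is globally Lipschitz in $x$, uniformly in $s\in[0,T]$ and $\vare\in[\vare^\ast/2,2\vare^\ast]$. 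Indeed, $x\mapsto x\boldsymbol{1}_{\{x>0\}}$ is $1$-Lipschitz, the denominator is at least $\vare^\ast/2$, and $(s+\vare)^{2H-1}\le(\vare^\ast/2)^{2H-1}$. This yields
\[
\left|b^\vare(s,x)-b^\vare(s,y)\right|\le K|x-y|,\qquad K=a(\vare^\ast/2)^{2H-1}(\vare^\ast/2)^{-2}+b.
\]

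Subtract the equations for $X^\vare$ and $X^{\vare^\ast}$. The noise term $\si B^H_t$ cancels exactly, which is the crucial point, since no regularity of $B^H$ is then needed. Writing $D_t=|X^\vare_t-X^{\vare^\ast}_t|$, we get
\[
D_t\le \int_0^t\left|b^\vare(s,X^{\vare^\ast}_s)-b^{\vare^\ast}(s,X^{\vare^\ast}_s)\right|ds+K\int_0^tD_sds.
\]
For the first integrand, bound separately the difference in the numerator $(s+\vare)^{2H-1}-(s+\vare^\ast)^{2H-1}$ and in the denominator $\vare$ versus $\vare^\ast$. Both are controlled by $C(\vare^\ast,H,a)|\vare-\vare^\ast|$, using the mean value theorem on $[\vare^\ast/2,\infty)$ and the lower bound $\vare^\ast/2$ on the denominators. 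Hence the first term is at most $C T|\vare-\vare^\ast|$, independently of the path.

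Gronwall's lemma (applicable since $D$ is continuous) then gives
\[
\sup_{t\in[0,T]}D_t\le CT|\vare-\vare^\ast|e^{KT}\longrightarrow 0 \quad\text{as } \vare\to\vare^\ast.
\]
This gives even uniform convergence on $[0,T]$. Since $T$ is arbitrary, the claim holds for all $t\ge0$.

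The only step requiring care is verifying that the Lipschitz constant and the $\vare$-difference bound are uniform in $s$, including near $s=0$ where $(s+\vare)^{2H-1}$ is largest. This is handled by keeping $\vare\ge\vare^\ast/2>0$, so I expect no real obstacle.
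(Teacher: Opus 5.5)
Your proof is correct. It takes a genuinely different route from the paper's.

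The paper uses no Lipschitz or Gronwall estimate. It exploits the monotonicity of $\vare\mapsto X^\vare_t$, which comes from the comparison criterion (Proposition~\ref{prop-comp}), to define the one-sided limits $X^{+}_t=\lim_{\vare\downarrow\vare^\ast}X^\vare_t$ and $X^{-}_t=\lim_{\vare\uparrow\vare^\ast}X^\vare_t$. It then passes to the limit in \eqref{sde-2}, using monotone convergence for $\int_0^t X^\vare_s\,ds$ and dominated convergence for the drift integral, with domination by $(t+\vare^\ast/2)^{2H-1}/(\vare^\ast/2)$. This shows that both $X^{+}$ and $X^{-}$ solve \eqref{sde-2} with $\vare=\vare^\ast$, and uniqueness identifies them with $X^{\vare^\ast}$.

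Your argument bypasses monotonicity entirely and proves more. You get the explicit bound $\sup_{t\in[0,T]}|X^\vare_t-X^{\vare^\ast}_t|\le CTe^{KT}|\vare-\vare^\ast|$, where $C$ and $K$ depend only on $a,b,H,\vare^\ast$ and not on $\om$. That is local Lipschitz dependence on $\vare$, with convergence uniform in $t\in[0,T]$ and even in $\om$; the paper obtains only pointwise convergence.

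Your route is also more self-contained. The paper's last step needs pathwise uniqueness for the $\vare^\ast$-equation at a fixed $\om$, and implicitly needs $X^{\pm}$ to be continuous (which follows from the limiting integral equation). That pathwise uniqueness is itself a consequence of the same global Lipschitz bound on $b^{\vare^\ast}$ that you verify.

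What the paper's soft monotone-limit argument buys is that it mirrors the construction of $X$ as $\vare\to 0$, where no uniform Lipschitz constant exists. At a fixed $\vare^\ast>0$, however, your direct stability estimate is the simpler and stronger proof.
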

 \begin{proof}
 For all $t\geq 0$, by the monotonicity of $\{X^\vare\}$,  we denote
 \[
 X^{+}_t=\lim\limits_{\vare\downarrow\vare^\ast}X^\vare_t, \ \text{ and }\ X^{-}_t=\lim\limits_{\vare\uparrow\vare^\ast}X^\vare_t.
 \]
It is clear that 
\begin{align*}
&X^{2\vare^\ast}_t\leq X^{+}_t\leq X^{\vare^\ast}_t\leq X^{-}_t\leq X^{\frac{\vare^\ast}{2}}_t,\\
 &X^\vare_t\uparrow X^+_t, \ \text{ as }\vare\downarrow\vare^\ast,
 \text{and }\\ &X^\vare_t\downarrow X^-_t,\ \text{ as }\vare\uparrow\vare^\ast.
 \end{align*} 

It follows from the monotonicity of $\{X^\vare\}$, integrability of $\{X^\vare\}$ and the monotone convergence theorem that, for any $t>0$,
\begin{equation}\label{plus-1}
\int_0^t X^{+}_sds=\lim\limits_{\vare\downarrow\vare^\ast}\int_0^tX^\vare_sds
\end{equation}
and 
\begin{equation}\label{minus-1}
\int_0^t X^{-}_sds=\lim\limits_{\vare\uparrow\vare^\ast}\int_0^tX^\vare_sds.
\end{equation}
Notice that
\[
X^\vare_t\boldsymbol{1}_{\{X^\vare_t>0\}}\uparrow X^+_t\boldsymbol{1}_{\{X^+_t>0\}},\ \text{ as }\vare\downarrow\vare^\ast,\\
\]
and
\[
X^\vare_t\boldsymbol{1}_{\{X^\vare_t>0\}}=X^\vare_t\boldsymbol{1}_{\{X^\vare_t\geq0\}}\downarrow X^-_t\boldsymbol{1}_{\{X^-_t\geq 0\}}=X^-_t\boldsymbol{1}_{\{X^-_t>0\}},\ \text{ as }\vare\uparrow\vare^\ast.\\
\]
Also notice that
\[
0\leq \frac{(t+\vare)^{2H-1}}{X^\vare_t\boldsymbol{1}_{\{X^\vare_t>0\}}+\vare}\leq \frac{(t+\vare^\ast)^{2H-1}}{\vare^\ast},\ \forall \vare>\vare^\ast,\ t\geq0,
\]
and
\[
0\leq \frac{(t+\vare)^{2H-1}}{X^\vare_t\boldsymbol{1}_{\{X^\vare_t>0\}}+\vare}\leq \frac{(t+\vare^\ast/2)^{2H-1}}{\vare^\ast/2},\ \forall \vare\in(\vare^\ast/2,\vare^\ast),\ t\geq0.
\]
Thus, by the dominated convergence theorem we obtain, for any $t>0$,
\begin{equation}\label{plus-2}
\lim\limits_{\vare\downarrow\vare^\ast}\int_0^t\frac{(s+\vare)^{2H-1}}{X^\vare_s\boldsymbol{1}_{\{X^\vare_s>0\}}+\vare}ds=\int_0^t\frac{(s+\vare^\ast)^{2H-1}}{X^+_s\boldsymbol{1}_{\{X^+_s>0\}}+\vare^\ast}ds,
\end{equation}
and
\begin{align}\label{minus-2}
\lim\limits_{\vare \uparrow\vare^\ast}\int_0^t\frac{(s+\vare)^{2H-1}}{X^\vare_s\boldsymbol{1}_{\{X^\vare_s>0\}}+\vare}ds
&=\lim\limits_{\vare \uparrow\vare^\ast}\int_0^t\frac{(s+\vare)^{2H-1}}{X^\vare_s\boldsymbol{1}_{\{X^\vare_s\geq0\}}+\vare}\notag\\
&=\int_0^t\frac{(s+\vare^\ast)^{2H-1}}{X^-_s\boldsymbol{1}_{\{X^-_s\geq0\}}+\vare^\ast}ds\notag\\
&=\int_0^t\frac{(s+\vare^\ast)^{2H-1}}{X^-_s\boldsymbol{1}_{\{X^-_s>0\}}+\vare^\ast}ds.
\end{align}
It implies from  \eqref{plus-1}-\eqref{minus-2} that
\begin{align}\label{plus-3}
X^+_t&=\lim\limits_{\vare\downarrow\vare^\ast}X^\vare_t\notag\\
&=X_0+a\lim\limits_{\vare\downarrow\vare^\ast}\int_0^t\frac{(s+\vare)^{2H-1}}{X^\vare_s\boldsymbol{1}_{\{X^\vare_s>0\}}+\vare}ds-b \lim\limits_{\vare\downarrow\vare^\ast}\int_0^tX^\vare_sds+\si B^H_t \notag\\
&=X_0+a\int_0^t\frac{(s+\vare^\ast)^{2H-1}}{X^+_s\boldsymbol{1}_{\{X^+_s>0\}}+\vare^\ast}ds-b\int_0^tX^+_sds+\si B^H_t,
\end{align}
and 
\begin{align}\label{minus-3}
X^-_t&=\lim\limits_{\vare\uparrow\vare^\ast}X^\vare_t\notag\\
&=X_0+a\lim\limits_{\vare\uparrow\vare^\ast}\int_0^t\frac{(s+\vare)^{2H-1}}{X^\vare_s\boldsymbol{1}_{\{X^\vare_s>0\}}+\vare}ds-b \lim\limits_{\vare\uparrow\vare^\ast}\int_0^tX^\vare_sds+\si B^H_t \notag\\
&=X_0+a\int_0^t\frac{(s+\vare^\ast)^{2H-1}}{X^-_s\boldsymbol{1}_{\{X^-_s>0\}}+\vare^\ast}ds-b\int_0^tX^-_sds+\si B^H_t.
\end{align}
So, $X^+$ and $X^-$ are solutions of \eqref{sde-2} with $\vare=\vare^\ast$. Since $X^{\vare^\ast}$ is the unique solution of \eqref{sde-2} with $\vare=\vare^\ast$, we have $X^{\vare^\ast}=X^+=X^-$, and therefore, we know that $\lim\limits_{\vare\to\vare^\ast}X^\vare_t$ exists and 
\[
\lim\limits_{\vare\to\vare^\ast}X^\vare_t=X^{\vare^\ast}_t.
\]
The proof is completed. \end{proof}
\bigskip

 \begin{lemma} \label{lem-3-6} Let $g(t)$ be a continuous function on $[0,\infty)$ with $g(0)=0$, and for any fixed $T>0$ 
 \[
 |g(t)-g(s)|\leq C|t-s|^\beta,
 \]
 for some constants $C>0$ and $0<\beta<H$. Consider the differential equation 
 \begin{equation}\label{lem-equ}
 x_t=x_0+a\int_0^t \frac{s^{2H-1}}{x_s}ds-b\int_0^tx_sds+g(t),
 \end{equation}
 where $x_0>0$ is a given constant. There exists $\delta>0$ such that equation \eqref{lem-equ} has a unique continuous solution on $[0,\delta]$.
   \end{lemma}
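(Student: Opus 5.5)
We argue pathwise, with $\omega$ fixed so that $g$ is a deterministic $\beta$-H\"older function, and use a Banach fixed point argument on a small interval $[0,\delta]$. On such an interval the solution should stay near $x_0>0$, so the singularity at $x=0$ is never met. The singular weight $s^{2H-1}$ is integrable at $0$ because $2H-1>-1$. Concretely, set
\[
E_\delta=\Big\{x\in C([0,\delta]):\ x_0=x_0,\ \sup_{t\in[0,\delta]}|x_t-x_0|\le \tfrac{x_0}{2}\Big\},
\]
with the sup norm. This is a closed subset of $C([0,\delta])$, hence complete. Every $x\in E_\delta$ satisfies $\tfrac{x_0}{2}\le x_t\le \tfrac{3x_0}{2}$. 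Define
\[
(\Gamma x)_t=x_0+a\int_0^t\frac{s^{2H-1}}{x_s}ds-b\int_0^tx_sds+g(t).
\]

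\textbf{Step 1: $\Gamma$ maps $E_\delta$ into itself.} $(\Gamma x)_t$ is continuous in $t$, since the integrand $s^{2H-1}/x_s$ is dominated by $\tfrac{2}{x_0}s^{2H-1}\in L^1$. We have
\[
|(\Gamma x)_t-x_0|\le \frac{2a}{x_0}\frac{\delta^{2H}}{2H}+\frac{3bx_0}{2}\delta+C\delta^\beta,
\]
using $g(0)=0$ and the H\"older bound. Choosing $\delta$ small makes the right-hand side at most $x_0/2$.

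\textbf{Step 2: $\Gamma$ is a contraction.} For $x,y\in E_\delta$ we have $|1/x_s-1/y_s|=|x_s-y_s|/(x_sy_s)\le \tfrac{4}{x_0^2}|x_s-y_s|$. Hence
\[
\|\Gamma x-\Gamma y\|_\infty\le\Big(\frac{4a}{x_0^2}\frac{\delta^{2H}}{2H}+b\delta\Big)\|x-y\|_\infty .
\]
Shrinking $\delta$ further makes the constant less than $1$. Banach's fixed point theorem then gives a unique fixed point in $E_\delta$, and this fixed point is a continuous solution of \eqref{lem-equ} on $[0,\delta]$.

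\textbf{Step 3: uniqueness among all continuous solutions.} This is the main subtlety, since a priori another continuous solution $y$ need not lie in $E_\delta$. We interpret ``continuous solution'' to require $y_s>0$ on $[0,\delta]$, so that the equation makes sense; then $y$ is bounded below by some $m>0$ there. We compare $y$ with the fixed point $x$ via the Lipschitz estimate of Step 2, now with constant $1/(x_sy_s)\le \tfrac{2}{x_0m}$. Setting $u(t)=\sup_{s\le t}|x_s-y_s|$, we get
\[
u(t)\le K\big(t^{2H}+t\big)u(t)\quad\text{with } K=\max\Big(\frac{2a}{x_0m}\frac{1}{2H},\,b\Big).
\]
So $u\equiv0$ on a short interval $[0,t_1]$. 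We then propagate along $[0,\delta]$. Let $t^\ast=\sup\{t\le\delta: x=y \text{ on } [0,t]\}$. If $t^\ast<\delta$, restart the same estimate from $t^\ast$. On $[t^\ast,\delta]$ the weight $s^{2H-1}$ is bounded, so we get a Gr\"onwall-type inequality forcing $x=y$ slightly beyond $t^\ast$, a contradiction. Alternatively, one can simply restrict to $\delta$ small, which is all the statement asks for.
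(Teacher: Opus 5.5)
Your proof is correct and follows the paper's argument: a Banach fixed-point argument on a closed ball of continuous functions bounded away from zero, with the same invariance and contraction estimates. The paper uses values in $[x_0/2,2x_0]$ and you use $[x_0/2,3x_0/2]$, but the contraction constant $\frac{2a\delta^{2H}}{Hx_0^2}+b\delta$ is the same, and your Step 3 goes beyond the paper, which only proves uniqueness inside that ball; your Gr\"onwall propagation argument correctly extends uniqueness to all positive continuous solutions (your closing alternative of simply shrinking $\delta$ is unnecessary and, as stated, would make $\delta$ depend on the competitor $y$).
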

 \begin{proof} Fix $T=1$. Then, there exist $C>0$ and $0<\beta<H$ such that 
 \[
 |g(t)|\leq Ct^\beta, \ \forall t\in [0,1].
 \]
Noticing that the functions $$f(t)=\frac{at^{2H}}{Hx_0}-\frac{bx_0t}{2}+Ct^\beta$$ and $$h(t)=\frac{at^{2H}}{2Hx_0}-bx_0t-Ct^\beta$$
are continuous on $[0,1]$ and that $f(0)=h(0)=0$, we can choose some $0<\delta<1$ such that 
\begin{equation}\label{lem-equ-1}
-\frac{x_0}{2}\leq h(t)\leq f(t)\leq x_0, \ \forall t\in [0,\delta],
\end{equation}
and 
\begin{equation}\label{lem-equ-2}
\frac{2a\delta^{2H}}{Hx_0^2}+b\delta<1.
\end{equation}

 Consider the following complete metric space
 \[
 C_{x_0}([0,\delta])=\left\{y:[0, \delta]\to \left[\frac{x_0}{2},2x_0\right]: y_t \text{ is continuous on } [0,\delta] \text{ and } y_0=x_0\right\}
 \]
 with distance function given by
 \[
 d(y,z)=\Vert y-z\Vert_\infty=\sup\limits_{0\leq t\leq \delta}|y_t-z_t|, \ y, z\in C_{x_0}([0,\delta]).
 \]
Now for each $x\in C_{x_0}([0,\delta])$ define the mapping
\[
(Tx)_t=x_0+a\int_0^t \frac{s^{2H-1}}{x_s}ds-b\int_0^tx_sds+g(t).
\]
Notice that $(Tx)_0=x_0$ and $Tx$ is continuous on $[0,\delta]$ for any  $x\in C_{x_0}([0,\delta])$. It implies from the definition of $C_{x_0}([0,\delta])$ and \eqref{lem-equ-1} that, for any $x\in C_{x_0}([0,\delta])$,
\begin{align*}
(Tx)_t\leq& x_0+\frac{2a}{x_0}\int_0^ts^{2H-1}ds-\frac{bx_0}{2}t+Ct^\beta\\
=& x_0+\frac{at^{2H}}{Hx_0}-\frac{bx_0}{2}t+Ct^\beta\\
=&x_0+f(t)\leq 2x_0, \ \forall t\in [0,\delta],
\end{align*}
and 
\begin{align*}
(Tx)_t\geq& x_0+\frac{a}{x_0}\int_0^ts^{2H-1}ds-bx_0t-Ct^\beta\\
=& x_0+\frac{at^{2H}}{2Hx_0}-bx_0t-Ct^\beta\\
=&x_0+h(t)\geq \frac{x_0}{2}, \ \forall t\in [0,\delta].
\end{align*}

Thus $T$ maps $C_{x_0}([0,\delta])$ into $C_{x_0}([0,\delta])$. Next, let us prove that $T$ is a contraction. Let $x^1, x^2\in C_{x_0}([0,\delta])$. Then, 
\begin{align*}
|(Tx^1)_t-(Tx^2)_t|&\leq a\int_0^ts^{2H-1}\left|\frac{1}{x^1_s}-\frac{1}{x^2_s}\right|ds+b\int_0^t|x^1_s-x^2_s|ds\\
&=a\int_0^ts^{2H-1}\frac{\left|x^1_s-x^2_s\right|}{x_s^1x_s^2}ds+b\int_0^t|x^1_s-x^2_s|ds\\
&\leq \left(\frac{2at^{2H}}{Hx_0^2}+bt\right)\Vert x^1-x^2\Vert_\infty\\
&\leq \left(\frac{2a\delta^{2H}}{Hx_0^2}+b\delta\right)\Vert x^1-x^2\Vert_\infty, \ \forall t\in [0,\delta].
\end{align*}
Thus, we have 
\[
\Vert Tx^1-Tx^2\Vert_\infty\leq \left(\frac{2a\delta^{2H}}{Hx_0^2}+b\delta\right)\Vert x^1-x^2\Vert_\infty.
\] 
Since $\frac{2a\delta^{2H}}{Hx_0^2}+b\delta<1$, the mapping $T$ is a contraction. Thus, $T$ has a unique fixed point $x$ in $C_{x_0}([0,\delta])$, and hence, $x$ is the unique solution of \eqref{lem-equ}  on $[0,\delta]$.
 \end{proof}

 \begin{theorem}\label{thm-3-7}
 The limit process $X=\{X_t,t\geq 0\}$ has the following properties:
 \begin{itemize}
 \item[(a)] the set $\{t>0: X_t>0\}$ is open in the natural topology on $\RR$;
 \item[(b)] $X$ is continuous on $\{t\geq 0: X_t>0\}$.
 \end{itemize}
 \end{theorem}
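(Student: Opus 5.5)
The plan is to show that around any point $t_0$ with $X_{t_0}>0$ there is a neighborhood on which $X$ is continuous and stays positive. This gives (a) and (b) at once, since (b) is a local statement. The tool is a local version of Lemma \ref{lem-3-6}, restarted at time $t_0$ and combined with the comparison criterion of Proposition \ref{prop-comp}.

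\textbf{Step 1: a lower bound near $t_0$.} Fix $t_0>0$ with $X_{t_0}=x^\ast>0$. Since $X^\vare_{t_0}\uparrow x^\ast$, pick $\vare_0$ with $X^{\vare_0}_{t_0}>x^\ast/2$. Because $X^{\vare_0}$ is continuous, there is $\eta>0$ with $X^{\vare_0}_t>x^\ast/4$ on $(t_0-\eta,t_0+\eta)$. By monotonicity, $X^\vare_t>x^\ast/4$ there for every $\vare<\vare_0$, and so also $X_t>x^\ast/4$ there. This already gives (a): every point of $\{t>0: X_t>0\}$ has a neighborhood inside the set.

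\textbf{Step 2: passing to the limit equation on the interval.} On $I=(t_0-\eta,t_0+\eta)$, the indicators in \eqref{sde-2} equal one for all $\vare<\vare_0$. The integrands are bounded by $(s+\vare)^{2H-1}\cdot 4/x^\ast$, and $t_0-\eta>0$ can be arranged, so they are uniformly bounded. For $s,t\in I$ I subtract the equations at times $s$ and $t$:
\[
X^\vare_t-X^\vare_s=a\int_s^t\frac{(r+\vare)^{2H-1}}{X^\vare_r+\vare}dr-b\int_s^tX^\vare_rdr+\si(B^H_t-B^H_s).
\]
Monotone/dominated convergence (as in Theorem \ref{thm-3-5}) then gives the same identity for $X$ with $\vare=0$. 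In particular
\[
|X_t-X_s|\le \Big(\tfrac{4a}{x^\ast}(t_0-\eta)^{2H-1}+b\sup|X|\Big)|t-s|+\si C|t-s|^{H-\ga}.
\]
By \eqref{limit-bound}, $\sup|X|$ is finite, so $X$ is (Hölder) continuous on $I$. This gives (b) at $t_0>0$.

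\textbf{Step 3: the point $t=0$.} Here the singular factor $s^{2H-1}$ prevents the uniform bound above, and I expect this to be the main obstacle. I would use Lemma \ref{lem-3-6} with $g=\si B^H$, which gives a continuous solution $x$ on $[0,\delta]$ with values in $[X_0/2,2X_0]$. I would then show that $X=x$ on $[0,\delta]$.

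\emph{Inequality $X^\vare\le x$.} Apply Proposition \ref{prop-comp}, suitably adapted since $g_2(s)=s^{2H-1}$ is singular at $0$, to $X^\vare$ and $x$. The initial derivative argument fails there, but for small $t$ the difference $x_t-X^\vare_t$ is of order $t^{2H}$ versus $\vare^{2H-1}t$, so it is positive. Equivalently, run the first-crossing argument: at a touching time $t_1>0$ the derivative of the difference is strictly positive. This yields $X^\vare<x$, hence $X\le x$.

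\emph{Inequality $X\ge x$.} On $[0,\delta]$ we have $X\ge X^{\vare}$, and by Step 1 at $t_0=0$ (using continuity of $X^{\vare_0}$ and $X^{\vare_0}_0=X_0$) we get $X>X_0/4$ near $0$. Then the identity of Step 2 holds on $[0,\delta']$ by monotone convergence, since $s^{2H-1}$ is integrable. So $X$ solves \eqref{lem-equ} and is bounded below by $X_0/4$. A Gronwall-type uniqueness argument, with the Lipschitz bound $16/X_0^2$ on $1/x$ and the integrable weight $s^{2H-1}$, gives $X=x$ on a small interval. So $X$ is continuous at $0$, completing (b).
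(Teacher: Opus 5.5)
Your proof is correct. Part (a) matches the paper's argument, but part (b) takes a different route.

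For $t_0>0$, the paper never passes to the limit in the equation. It bounds the derivative of $X^\vare-X^{\vare_0}$ uniformly on $[t^\ast-\de_0,t^\ast+\de_0]$, deduces equicontinuity, and combines this with monotonicity and pointwise convergence to get $X^\vare\to X$ uniformly, hence continuity of $X$. You instead take the limit in the increment identity by dominated convergence and read off a H\"older modulus for $X$ directly. Your route is shorter, and it already yields the integral equation on the interval, which the paper only derives later, via Dini, in Theorem \ref{thm-3-9}(b). The paper's route gives locally uniform convergence of the approximations as a by-product.

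At $t=0$, the paper merely asserts that $X$ is ``the unique continuous solution'' supplied by Lemma \ref{lem-3-6}. That presupposes $X$ is continuous, solves \eqref{lem-equ} near $0$, and takes values in $[X_0/2,2X_0]$, which is precisely what has to be shown. So your Step 3 fills a real gap in the paper.

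Step 3 is, however, heavier than needed. Once $X\ge X^{\vare_0}>X_0/4$ on $[0,\de')$, the integrands are dominated by $\frac{4}{X_0}s^{2H-1}\in L^1$. This is dominated convergence, not monotone convergence, since $(s+\vare)^{2H-1}/(X^\vare_s+\vare)$ need not be monotone in $\vare$. Hence the identity of Step 2 holds on $[0,\de')$, and the same estimate with $\int_s^t r^{2H-1}dr\le (t-s)^{2H}/(2H)$ gives continuity at $0$ immediately. The singular weight destroys only the Lipschitz bound, not integrability.

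So the comparison $X^\vare\le x$ and the identification with the solution of Lemma \ref{lem-3-6} can both be dropped. Even if you keep the identification, the Gronwall step alone gives $X=x$ without the comparison.
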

\begin{proof}
 (a). Consider arbitrary $t^\ast \in \{t>0: X_{t}>0\}.$ Since 
 \[
 X^{\vare}_t\uparrow  X_{t}, \ \forall t\geq 0, \ \text{ as } \vare\downarrow 0,
 \]
 There is some $\vare_0>0$ such that $X^{\vare_0}_{t^\ast}>0$. 
 Since $X^{\vare_0}_t$ is continuous in $t$, there exists $\delta_0>0$ such that $[t^\ast-\delta_0, t^\ast+\delta_0]\subset (0,\infty)$ and 
 \[
 X_t^{\vare_0}>0, \ \forall t\in [t^\ast-\delta_0, t^\ast+\delta_0].
 \]
 Then, we obtain 
 \[
 X_t\geq X_t^{\vare_0}>0, \ \forall t\in [t^\ast-\delta_0, t^\ast+\delta_0].
 \] 
 So, $(t^\ast-\delta_0, t^\ast+\delta_0)\subset [t^\ast-\delta_0, t^\ast+\delta_0]\subseteq \{t>0: X_{t}>0\}$.
 Therefore, $\{t>0: X_{t}>0\}$ is open.\\

(b). Let us first prove the continuity of $X$ at $t^\ast \in \{t>0: X_{t}>0\}$. 

Since $X^{\vare}\uparrow  X$ as $\vare\downarrow 0$, it implies from the proof of Part (a) that there exists $\vare_0$ and $\delta_0$ such that
\begin{equation}\label{thm-3-7-1}
X_t\geq X_t^\vare\geq X_t^{\vare_0}>0, \ \forall t\in [t^\ast-\delta_0, t^\ast+\delta_0], \ 0<\vare\leq \vare_0. 
\end{equation}
We shall prove that $\{X^\vare\}$ converges to $X$ uniformly on $[t^\ast-\delta_0, t^\ast+\delta_0]$ as $\vare\to 0$. Since each $X^\vare$ is continuous on $[t^\ast-\delta_0, t^\ast+\delta_0]$, the uniform convergence of $\{X^\vare\}$ to $X$ on the compact set $[t^\ast-\delta_0, t^\ast+\delta_0]$ implies the continuity of $X$ on $[t^\ast-\delta_0, t^\ast+\delta_0]$. In particular, $X$ is continuous at $t^\ast$.

Applying Theorem \ref{thm-2-2} with $T=t^\ast+\de_0$, we can see that, for $0<\vare\leq \vare_0$, 
\begin{equation}\label{thm-3-7-2}
0\leq \sup_{t^\ast-\de_0\leq t\leq t^\ast+\de_0} \{X^\vare_t-X^{\vare_0}_t\}\leq 2C+4\sigma\sup_{0\leq s\leq t^\ast+\de_0}|B_s^H|,
\end{equation}
namely, $\{X^\vare-X^{\vare_0}\}_{0<\vare\leq \vare_0}$ is uniformly bounded on $[t^\ast-\delta_0, t^\ast+\delta_0]$.

For $0<\vare\leq \vare_0$, by \eqref{thm-3-7-1} 
we get $\mathbf{1}_{\{X^{\vare}_{s} > 0\}}=\mathbf{1}_{\{X^{\vare_0}_{s} > 0\}}=1$ for all $s \in[t^*-\de_0,t^*+\de_0]$. Then, it implies from \eqref{sde-2} that $X^\vare-X^{\vare_0}$ satisfies the following differential equation on  $[t^\ast-\delta_0, t^\ast+\delta_0]$

\begin{align*}
&X^\vare_t-X^{\vare_0}_t\\
=&a\int_{t^\ast-\de_0}^t\left(\frac{(s+\vare)^{2H-1}}{X_s^{\vare}\mathbf{1}_{\{X^{\vare}_{s} > 0\}}+\vare}-\frac{(s+\vare_0)^{2H-1}}{X_s^{\vare_0}\mathbf{1}_{\{X^{\vare_0}_{s} > 0\}}+\vare_0}\right)ds-b\int_{t^\ast-\de_0}^t(X_s^{\vare}-X_s^{\vare_0})ds\\
=&a\int_{t^\ast-\de_0}^t\left(\frac{(s+\vare)^{2H-1}}{X_s^{\vare}+\vare}-\frac{(s+\vare_0)^{2H-1}}{X_s^{\vare_0}+\vare_0}\right)ds-b\int_{t^\ast-\de_0}^t(X_s^{\vare}-X_s^{\vare_0})ds.
\end{align*}

By \eqref{thm-3-7-1} and the continuity of of $X^\vare_t$ and $X^{\vare_0}_t$ in $t\in[t^\ast-\delta_0, t^\ast+\delta_0]$, $X^\vare-X^{\vare_0}$ is continuously differentiable on  $[t^\ast-\delta_0, t^\ast+\delta_0]$ and 
\begin{align*}
    (X^{\vare}_t-X^{\vare_0}_t)'=& a\left(\frac{(t+\vare)^{2H-1}}{X^\vare_{t}+\vare}-\frac{(t+\vare_0)^{2H-1}}{X^{\vare_0}_{t}+\vare_0}\right)-b(X^{\vare}_{t}-X^{\vare_0}_{t}). 
    \end{align*}
Then, for all $t\in [t^\ast-\delta_0, t^\ast+\delta_0]$ and all $0<\vare\leq \vare_0$, we can obtain the following estimate by \eqref{thm-3-7-2}
\begin{align*}
&\left|(X^{\vare}_t-X^{\vare_0}_t)'\right|\\
\leq& \frac{a(t+\vare)^{2H-1}}{X^\vare_{t}+\vare}+\frac{a(t+\vare_0)^{2H-1}}{X^{\vare_0}_{t}+\vare_0}+b (X^{\vare}_{t}-X^{\vare_0}_{t})\\
\leq& \frac{a(t^\ast-\de_0)^{2H-1}}{X^{\vare_0}_{t}}+\frac{a(t^\ast-\de_0+\vare_0)^{2H-1}}{X^{\vare_0}_{t}+\vare_0}+b\left(2C+4\sigma\sup_{0\leq s\leq t^\ast+\de_0}|B_s^H|\right)\\
\leq & \frac{a(t^\ast-\de_0)^{2H-1}}{m(\vare_0)}+\frac{a(t^\ast-\de_0+\vare_0)^{2H-1}}{m(\vare_0)+\vare_0}+b\left(2C+4\sigma\sup_{0\leq s\leq t^\ast+\de_0}|B_s^H|\right),
\end{align*}
where $m(\vare_0)=\inf\limits_{t^\ast-\de_0\leq t\leq t^\ast+\de_0} X^{\vare_0}_t>0$.
Thus, $\{X^\vare-X^{\vare_0}\}_{0<\vare\leq \vare_0}$ is equicontinuous on $[t^\ast-\delta_0, t^\ast+\delta_0]$. 

Together  with the monotonicity of $\{X^\vare-X^{\vare_0}\}_{0<\vare\leq \vare_0}$ in $\vare$, the pointwise convergence of $\{X^\vare-X^{\vare_0}\}_{0<\vare\leq \vare_0}$ to $X-X^{\vare_0}$, and the uniform boundedness of  $\{X^\vare-X^{\vare_0}\}_{0<\vare\leq \vare_0}$, we obtain that
$\{X^\vare-X^{\vare_0}\}_{0<\vare\leq \vare_0}$  converges  to $X-X^{\vare_0}$ uniformly on $[t^\ast-\delta_0, t^\ast+\delta_0]$, which implies that $\{X^\vare\}_{0<\vare\leq \vare_0}$ converges  to $X$ uniformly on $[t^\ast-\delta_0, t^\ast+\delta_0]$. 

Next, let us prove the continuity of $X$ at $t=0\in \{t\geq 0: X_t>0\}$. With $g(t)=B^H_t$ and $x_0=X_0$ in Lemma \ref{lem-3-6}, we can see that $X_t$ is the unique continuous solution of \eqref{lem-equ} on some interval $[0,\delta]$ for some $\delta>0$. Thus, $X_t$ is continuous at $t=0$.
\end{proof}
\begin{corollary}\label{coro-3-8}
The trajectories of $X=\{X_t, t\geq 0\}$ are continuous almost everywhere on $[0,\infty)$ and therefore are Riemann integrable on any bounded interval $[a,b]\subset[0,\infty)$.
\end{corollary}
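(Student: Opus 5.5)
The plan is to combine Theorem \ref{thm-3-7} with Corollary \ref{coro-1} and then apply Lebesgue's criterion for Riemann integrability.

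First, fix $\om$ and write $[0,\infty)=P\cup Z$, where $P=\{t\geq 0: X_t>0\}$ and $Z=\{t\geq0: X_t\leq 0\}$. By Corollary \ref{coro-1}, $Z=\{t\geq 0: X_t=0\}$ and $\la(Z\cap[0,T])=0$ for every $T>0$, so $Z$ is a Lebesgue null set. By Theorem \ref{thm-3-7}(a), together with the fact that $0\in P$ because $X_0>0$, the set $P$ is relatively open in $[0,\infty)$. By Theorem \ref{thm-3-7}(b), $X$ is continuous at every point of $P$. To be precise, part (b) gives continuity of $X$ restricted to $P$; since $P$ is relatively open, every $t^\ast\in P$ has a neighbourhood inside $P$, so this is genuine continuity of $X$ at $t^\ast$. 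The proof of Theorem \ref{thm-3-7} in fact gives uniform convergence on a whole neighbourhood, which yields this directly. Hence the set of discontinuity points of $X$ is contained in $Z$, which has Lebesgue measure zero. This proves the first assertion.

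For Riemann integrability on $[a,b]\subset[0,\infty)$, I invoke Lebesgue's criterion: a function on a compact interval is Riemann integrable if and only if it is bounded and continuous almost everywhere. Almost-everywhere continuity is the first part. Boundedness follows from Corollary \ref{coro-1} and \eqref{limit-bound} applied with $T=b$:
\[
0\leq X_t\leq C+2\si\sup_{s\in[0,b]}|B^H_s|<\infty,\quad t\in[a,b],
\]
where finiteness of the supremum holds by property (a) of the chosen version of $B^H$.

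I do not expect a real obstacle. The only points needing care are checking that continuity on the open set $P$ means continuity of $X$ as a function on $[0,\infty)$ at those points, which is handled by openness as above, and checking the endpoint $t=0$, which Theorem \ref{thm-3-7} already covers via Lemma \ref{lem-3-6}. As a consequence, the Lebesgue integrals in \eqref{limit-int} agree with the Riemann integrals.
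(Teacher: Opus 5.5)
Your proposal is correct and is essentially the paper's argument. The paper's proof just cites Corollary \ref{coro-1} (the set $\{t: X_t\le 0\}$ is Lebesgue-null) and Theorem \ref{thm-3-7} (continuity at every point where $X_t>0$); you additionally make explicit Lebesgue's criterion and the boundedness of $X$ on $[0,T]$ from \eqref{limit-bound} together with $X\ge 0$, and your care about relative openness at $t=0$ is harmless but not needed, since a single point has measure zero anyway.
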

\begin{proof}
The result follows immediately from Corollary \ref{coro-1} and Theorem \ref{thm-3-7}.
\end{proof}

Since the set $\{t>0: X_t>0\}$ is open in the natural topology of $\mathbb{R}$, it can be written as at most countable union of disjoint open intervals, that is, there exists $N\in \mathbb{N}\bigcup \{\infty\}$ such that 
\[
\{t>0: X_t>0\}=\bigcup_{i=0}^N(\alpha_i, \beta_i),
\]
where $\alpha_0=0$ and $(\alpha_i,\beta_i)\bigcap (\alpha_j,\beta_j)=\emptyset$ for $i\neq j$. Then, the set $\{t\geq0: X_t>0\} $ can be written as
\begin{equation}\label{interval}
\{t\geq0: X_t>0\}=[\alpha_0,\beta_0)\cup\left(\bigcup_{i=1}^N(\alpha_i, \beta_i)\right).
\end{equation}

Note from Corollary \ref{coro-1} that $X_{\alpha_i}=0$ for all $i\geq 1$ and $X_{\beta_i}=0$ for all $i\geq 0$.\\

\begin{theorem}\label{thm-3-9}
Let $(\alpha_i,\beta_i)$, $i\geq 1$, be an interval in the presentation of \eqref{interval}. Then,
\begin{itemize}
\item[(a)] $\lim\limits_{t\downarrow \alpha_i}X_t=0=X_{\alpha_i}$ and $\lim\limits_{t\uparrow \beta_i}X_t=0=X_{\beta_i}$;
\item[(b)] for any $t\in [\alpha_i,\beta_i]$, we have
\[
X_t=a\int_{\alpha_i}^t\frac{s^{2H-1}}{X_s}ds-b\int_{\alpha_i}^t X_sds+B^H_t-B^H_{\alpha_i}.
\]
\end{itemize}
\end{theorem}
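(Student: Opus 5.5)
Note first that on $(\alpha_i,\beta_i)$ we have $X>0$, so by Theorem \ref{thm-3-7} the path $X$ is continuous there. Also $X_{\alpha_i}=X_{\beta_i}=0$. I would prove (b) on compact subintervals first and then get (a) from it. (I read the $B^H$ term in (b) as carrying the factor $\si$, in line with \eqref{intro-1}.)

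\textbf{Step 1 (equation on compacts).} Fix $\alpha_i<u<t<\beta_i$. From the proof of Theorem \ref{thm-3-7}(b), $[u,t]$ is covered by finitely many intervals on each of which, for some $\vare_0$, we have $X^\vare\ge X^{\vare_0}>0$ and $X^\vare\to X$ uniformly. Hence there is $\vare_1$ with $\inf_{[u,t]}X^{\vare_1}=m>0$, and the indicator in \eqref{sde-2} equals $1$ on $[u,t]$ for all $\vare\le\vare_1$. Subtracting \eqref{sde-2} at $u$ from \eqref{sde-2} at $t$ gives the increment $X^\vare_t-X^\vare_u$ as the drift integrals over $[u,t]$ plus $\si(B^H_t-B^H_u)$. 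The integrands converge pointwise and are bounded by $(u)^{2H-1}/m$, so dominated convergence yields
\[
X_t-X_u=a\int_u^t\frac{s^{2H-1}}{X_s}ds-b\int_u^tX_sds+\si(B^H_t-B^H_u).
\]

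\textbf{Step 2 (letting $u\downarrow\alpha_i$).} Corollary \ref{coro-2} gives $s^{2H-1}/X_s\in L^1[0,t]$, and $X$ is bounded by \eqref{limit-bound}. Hence both integrals converge to the corresponding integrals over $[\alpha_i,t]$, and $B^H$ is continuous. Therefore $\lim_{u\downarrow\alpha_i}X_u=L$ exists, and
\[
X_t=L+a\int_{\alpha_i}^t\frac{s^{2H-1}}{X_s}ds-b\int_{\alpha_i}^tX_sds+\si(B^H_t-B^H_{\alpha_i}).
\]
The analogous limit $t\uparrow\beta_i$ exists for the same reason. So (a) and (b) both reduce to showing that these one-sided limits are $0$.

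\textbf{Step 3 (main obstacle: the limits vanish).} Since $X_u>0$ on the interval, $L\ge0$. I argue by contradiction and suppose $L>0$. Then $X>L/2$ on $(\alpha_i,\alpha_i+\eta)$ for some $\eta>0$, while $X_{\alpha_i}=0$. The representation \eqref{eq-X} holds with a nondecreasing $L^H$. To see monotonicity, note that in $L^H_t-L^H_u$ the drift difference $a\int_u^t(s+\vare)^{2H-1}/(\cdots)\,ds$ is nonnegative for each $\vare$, and by Fatou it dominates the limit integral. Comparing \eqref{eq-X} at $\alpha_i$ and at $u\downarrow\alpha_i$ then shows that $X$ has an upward jump of size $L$ at $\alpha_i$, equal to $L^H_{\alpha_i+}-L^H_{\alpha_i}$.

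I would rule out such a jump with the sharpened argument from Corollary \ref{coro-1}. For small $\vare$ we have $X^\vare_{\alpha_i}<0$, and each $X^\vare$ is continuous and increasing in $\vare\downarrow0$. Fix $t_1\in(\alpha_i,\alpha_i+\eta)$ and an $\vare$ with $X^\vare_{t_1}>L/2$, so that $X^\vare$ must cross from $\le0$ up to $L/2$ on $[\alpha_i,t_1]$. On that crossing interval the drift is at most $2a(\cdot)^{2H-1}/L$ once $X^\vare\ge L/4$, and the noise increment is at most $C|t-s|^\beta$. Meanwhile the interval where $X^\vare\le L/4$ has to be traversed in time tending to zero as $\vare\to0$; this is the Proposition \ref{prop-Leb}-type estimate, localized. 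The resulting rise of $X^\vare$ over a vanishing time window is then $o(1)$, which contradicts a rise of size $L/4$. Hence $L=0$.

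The same argument, run backward in time with the first crossing time after the exit, gives $\lim_{t\uparrow\beta_i}X_t=0$. Substituting $L=0$ into the displayed identity proves (b) on $(\alpha_i,\beta_i)$. Continuity extends it to the endpoints, using $X_{\alpha_i}=X_{\beta_i}=0$.
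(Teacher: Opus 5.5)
Your Steps 1 and 2 are sound. Dominated convergence on compact subintervals, followed by Corollary \ref{coro-2} and \eqref{limit-bound}, gives the identity and the existence of the one-sided limits, which is a tidy way to get existence of $\lim_{u\downarrow\alpha_i}X_u$ for free.

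The gap is in Step 3, which carries all the content of (a). You fix $t_1\in(\alpha_i,\alpha_i+\eta)$ and attribute the smallness to the claim that the region $\{X^\vare\le L/4\}$ is traversed in vanishing time. That cannot produce the contradiction, for three reasons:
\begin{itemize}
\item The rise you can actually control is the upcrossing from $L/4$ to $L/2$, and it takes place where $X^\vare\ge L/4$. With $t_1$ fixed, you only know this upcrossing lies in $[\alpha_i,t_1]$, an interval that does not shrink as $\vare\to0$.
\item Where $X^\vare\le L/4$, the drift can be as large as $a(s+\vare)^{2H-1}/\vare$. So a short time window there gives no $o(1)$ bound on the rise.
\item The estimate behind Proposition \ref{prop-Leb} only controls the time spent in $\{X^\vare\le 0\}$, not in $\{0<X^\vare\le L/4\}$.
\end{itemize}
So your sentence ``the resulting rise of $X^\vare$ over a vanishing time window is $o(1)$'' has no window to which it correctly applies.

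The missing idea is to make the upcrossing window short by choosing where to look, as the paper does. Since $X>L/2$ on all of $(\alpha_i,\alpha_i+\eta)$, for any $\de\in(0,\eta)$ take $\tau\in(\alpha_i,\alpha_i+\de)$ and then $\vare$ with $X^\vare_\tau>L/2$. Recall $X^\vare_{\alpha_i}<0$, and let $\tau_1=\sup\{s\in[\alpha_i,\tau]:X^\vare_s\le L/4\}$. Then $X^\vare_{\tau_1}=L/4$ and $X^\vare>L/4$ on $(\tau_1,\tau]$. Dropping the nonpositive term $-b\int_{\tau_1}^\tau X^\vare_s\,ds$ gives
\[
\frac{L}{4}<X^\vare_\tau-X^\vare_{\tau_1}\le\frac{4a}{L}\int_{\tau_1}^\tau s^{2H-1}ds+C\si(\tau-\tau_1)^{\beta}\le\frac{2a}{HL}\de^{2H}+C\si\de^{\beta},
\]
which is false for small $\de$.

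Equivalently, with $t_1$ fixed, you could use that the first time after $\alpha_i$ at which $X^\vare$ reaches $L/2$ tends to $\alpha_i$ as $\vare\to0$. But that also comes from the pointwise convergence $X^\vare_s\uparrow X_s>L/2$ near $\alpha_i$, not from a Proposition \ref{prop-Leb}-type bound.

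At $\beta_i$ the argument is not a literal time reversal. For the downcrossing from $L/2$ to $L/4$ you drop the positive singular drift instead. You must then bound $b\int X^\vare_s\,ds$ by $b\de$ times the uniform bound in \eqref{limit-bound}.
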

\begin{proof}
(a). Let us first prove the limit at the left endpoint of the interval.

Since $X_t>0$ on $(\alpha_i,\beta_i)$, we only need to prove that $\limsup\limits_{t\downarrow \alpha_i}X_t=0$.

The boundedness of $X$ on $(\alpha_i,\beta_i)$ in \eqref{limit-bound} implies that $x:=\limsup\limits_{t\downarrow \alpha_i}X_t<\infty$. Assume that $x>0$. By the definition of limsup, for any $\delta>0$, there exists $\tau\in (\alpha_i, \alpha_i+\delta)$ such that $X_\tau\in \left(\frac{3x}{4}, \frac{5x}{4}\right)$.

Since $X^\vare_\tau\uparrow X_\tau$, there exists $\vare(\tau)$ such that $X^\vare_\tau\in \left(\frac{x}{2},\frac{5x}{4}\right)$ for all $0<\vare\leq \vare(\tau)$.

Noting that $X^{\vare_1}_t>X^{\vare_2}_t$ for all $t$ whenever $0<\vare_1<\vare_2$ and that $X_{\alpha_i}=0$, we can easily see that $X^\vare_{\alpha_i}<0$ for all $\vare>0$.

Then, for $0<\vare\leq \vare(\tau)$, the continuity of $X^\vare_t$ in $t$ implies that there exists $\tau_1\in (\alpha_i, \tau)$ such that 
\[
\tau_1=\sup\left\{t\in (\alpha_i, \tau): X^\vare_t=\frac{x}{4}\right\}
\]
and $X^\vare_{\tau_1}=\frac{x}{4}$. So, $X^\vare_\tau-X^\vare_{\tau_1}>\frac{x}{4}$ and $X^\vare_t>\frac{x}{4}>0$ for all $t\in (\tau_1, \tau]$. 

Now applying \eqref{fbm-hold} with large enough $T$, $\gamma=\frac{H}{2}$,  \eqref{sde-2} and the inequality $|u^{2H}-v^{2H}|\leq |u-v|^{2H}$ for all $u, v\geq 0$,  one can obtain
\begin{align*}
0<\frac{x}{4}<X^\vare_\tau-X^\vare_{\tau_1}&=a\int_{\tau_1}^\tau\frac{(s+\vare)^{2H-1}}{X_s^{\vare}\mathbf{1}_{\{X^{\vare}_{s} > 0\}}+\vare}ds-b\int_{\tau_1}^\tau X_s^{\vare}ds+\sigma(B^H_\tau-B^H_{\tau_1})\\
&\leq\frac{2a}{Hx}\left(\tau^{2H}-\tau_1^{2H}\right)+C\sigma (\tau-\tau_1)^{\frac{H}{2}}\\
&\leq \frac{2a}{Hx}\left(\tau-\tau_1\right)^{2H}+C\sigma (\tau-\tau_1)^{\frac{H}{2}}\\
&\leq \frac{2a}{Hx}\delta^{2H}+C\sigma \delta^{\frac{H}{2}},
\end{align*}
which is not true for arbitrary small $\delta$. Hence,
\[
0\leq \liminf\limits_{t\downarrow \alpha_i}X_t\leq \limsup\limits_{t\downarrow \alpha_i}X_t=0.
\]
Thus, $\lim\limits_{t\downarrow \alpha_i}X_t=0$. By analogous analysis, we can obtain $\lim\limits_{t\uparrow \beta_i}X_t=0$.

(b) For any bounded closed interval $[\alpha_i^\ast, \beta_i^\ast]\subset (\alpha_i,\beta_i)$, Theorem \ref{thm-3-7} shows that $X$ is continuous on $[\alpha_i^\ast, \beta_i^\ast]$. Since the family $\{X^\vare\}$ of continuous functions monotonically increases to $X$ on $[\alpha_i^\ast, \beta_i^\ast]$, Dini's theorem implies that $X^\vare\to X$ uniformly on $[\alpha_i^\ast, \beta_i^\ast]$ as $\vare\to 0$.

Since $X$ is continuous on $[\alpha_i^\ast, \beta_i^\ast]$ and $X_t>0$ for all $t\in [\alpha_i^\ast, \beta_i^\ast]$, we have 
$$0<m:=\inf\limits_{t\in [\alpha_i^\ast, \beta_i^\ast]}X_t\leq \sup\limits_{t\in [\alpha_i^\ast, \beta_i^\ast]}X_t=:M<\infty.$$
Since $\{X^\vare\}$ converges uniformly to $X$ on $[\alpha_i^\ast, \beta_i^\ast]$, there exists $\vare_0>0$ such that 
\[
X^\vare_t\geq\frac{m}{2}, \ \forall t\in [\alpha_i^\ast, \beta_i^\ast], \vare\in (0,\vare_0].
\]
Hence, for all $t\in [\alpha_i^\ast, \beta_i^\ast]$ and all $\vare\in (0,\vare_0]$, we have
\begin{align*}
&\left|\frac{(t+\vare)^{2H-1}}{X^\vare_t\mathbf{1}_{\{X^{\vare}_{t} > 0\}}+\vare}-\frac{t^{2H-1}}{X_t\mathbf{1}_{\{X_{t} > 0\}}}\right|\\
=&\left|\frac{(t+\vare)^{2H-1}}{X^\vare_t+\vare}-\frac{t^{2H-1}}{X^\vare_t}\right|\\
=&\left|\frac{(t+\vare)^{2H-1}X_t-t^{2H-1}(X^\vare_t+\vare)}{(X^\vare_t+\vare)X_t}\right|\\
\leq&\frac{2}{m^2}\left(\left|((t+\vare)^{2H-1}-t^{2H-1})X_t\right|+t^{2H-1}\left|X_t^\vare+\vare-X_t\right|\right)\\
\leq &\frac{2}{m^2}\left(M(1-2H)(\alpha_i^\ast)^{2H-2}\vare+(\alpha_i^\ast)^{2H-1}(|X^\vare_t-X_t|+\vare)\right).
\end{align*}
Thus, $\left\{\frac{(t+\vare)^{2H-1}}{X^\vare_t\mathbf{1}_{\{X^{\vare}_{t} > 0\}}+\vare}=\frac{(t+\vare)^{2H-1}}{X^\vare_t+\vare}\right\}_{0<\vare\leq \vare_0}$ converges uniformly to $\frac{t^{2H-1}}{X_t\mathbf{1}_{\{X_{t} > 0\}}}=\frac{t^{2H-1}}{X_t}$ on $[\alpha_i^\ast, \beta_i^\ast]$. Together with the uniform convergence to $\{X^\vare\}$ to $X$, we obtain,  for $t\in [\alpha_i^\ast, \beta_i^\ast]$,
\begin{align*}
a\int_{\alpha_i^\ast}^t\frac{s^{2H-1}}{X_s}ds&=\lim\limits_{\vare\to 0}a\int_{\alpha_i^\ast}^t\frac{(s+\vare)^{2H-1}}{X^\vare_s\mathbf{1}_{\{X^{\vare}_{s} > 0\}}+\vare}ds\\
&=\lim\limits_{\vare\to 0}\left(X^\vare_t-X^\vare_{\alpha_i^\ast}+b\int_{\alpha_i^\ast}^tX_s^\vare ds-\sigma(B^H_t-B^H_{\alpha_i^\ast})\right)\\
&=X_t-X_{\alpha_i^\ast}+b\int_{\alpha_i^\ast}^tX_s ds-\sigma(B^H_t-B^H_{\alpha_i^\ast}),
\end{align*}
or equivalently, 
\begin{equation}\label{thm-3-9-1}
X_t=X_{\alpha_i^\ast}+a\int_{\alpha_i^\ast}^t\frac{s^{2H-1}}{X_s}ds-b\int_{\alpha_i^\ast}^tX_s^\vare ds+\sigma(B^H_t-B^H_{\alpha_i^\ast}).
\end{equation}
Since $X$ is Riemann integrable on any bounded interval (see Corollary \ref{coro-3-8}), $\frac{s^{2H-1}}{X_s}$ is Lebesgue integrable on any bounded interval (see Corollary \ref{coro-2}), and $X$ and $B$ are right continuous at $\alpha_i$ (see Theorem \ref{thm-3-9} for the right continuity of $X$ at $\alpha_i$), we  obtain
\[
\lim\limits_{\alpha_i^\ast\downarrow \alpha_i}X_{\alpha_i^\ast}=X_{\alpha_i}=0,
\] 
\[
\lim\limits_{\alpha_i^\ast\downarrow \alpha_i}\int_{\alpha_i^\ast}^t\frac{s^{2H-1}}{X_s} ds=\int_{\alpha_i}^t\frac{s^{2H-1}}{X_s} ds,
\]
\[
\lim\limits_{\alpha_i^\ast\downarrow \alpha_i}\int_{\alpha_i^\ast}^tX_sds=\int_{\alpha_i}^tX_sds,
\]
and 
\[
\lim\limits_{\alpha_i^\ast\downarrow \alpha_i}B^H_{\alpha_i^\ast}=B^H_{\alpha_i}.
\]
Sending $\alpha_i^\ast$ to $\alpha_i$ in \eqref{thm-3-9-1}, we get, for $t\in [\alpha_i, \beta_i)$,
\[
X_t=a\int_{\alpha_i}^t\frac{s^{2H-1}}{X_s}ds-b\int_{\alpha_i}^tX_s^\vare ds+\sigma(B^H_t-B^H_{\alpha_i})
\]
By taking $t\uparrow \beta_i$ in the above equation we can get the result  at $t=\beta_i$.
\end{proof}
\begin{remark}
Similar to the proof of Theorem \ref{thm-3-9}, we can easily prove that
\[
\lim\limits_{t\uparrow \beta_0}X_t=0=X_{\beta_0},
\]
and, for $t\in [0,\beta_0]$,
\[
X_t=X_0+a\int_0^t\frac{s^{2H-1}}{X_s}ds-b\int_0^tX_s^\vare ds+\sigma B^H_t.
\]
\end{remark}

\bibliographystyle{plain}
\bibliography{ref}
\end{document}